\title[Closed ideals and Lie ideals of minimal tensor product]{Closed ideals and Lie ideals of minimal tensor product of certain C$^*$-algebras}
\author[B. Talwar]{Bharat Talwar}
\address{Department of
	Mathematics\\ University of Delhi\\ Delhi-110007, INDIA.}
\email{btalwar.math@gmail.com}
\author[R. Jain]{Ranjana Jain}
\address{Department of
	Mathematics\\ University of Delhi\\ Delhi-110007, INDIA.}
\email{rjain@maths.du.ac.in}
\thanks{The first named author was supported by a Junior Research Fellowship
	of CSIR with file number 09/045(1442)/2016-EMR-I}
\newtheorem{theorem}{\bf Theorem}[section]
\newtheorem{lem}[theorem]{\bf Lemma}
\newtheorem{cor}[theorem]{\bf Corollary}
\newtheorem{remark}[theorem]{\bf Remark}
\newtheorem{prop}[theorem]{\bf Proposition}
\newtheorem{example}[theorem]{\bf Example}
\newcommand{\oop}{\widehat\otimes}
\newcommand{\seq}{\subseteq}
\newcommand{\oh}{\otimes^h}
\newcommand{\omin}{\otimes^{\min}}
\newcommand{\obp}{\otimes^\gamma}
\newcommand{\ot}{\otimes}
\newcommand{\ota}{\ot^{\alpha}}
\newcommand{\Z}{\mathcal{Z}}
\newcommand{\C}{\mathbb{C}}
\newcommand{\N}{\mathbb{N}}
\newcommand{\ol}{\overline}
\newcommand{\mcal}{\mathcal}
\newcommand{\CS}{$C\sp{\ast}$-algebra}
\newcommand{\CSS}{$C\sp{\ast}$-algebras}
\newcommand{\n}{\N_n}
\newcommand{\norm}[1]{\| #1 \|}
\begin{document}
\keywords{ Closed Lie ideal, \CS, minimal C$^*$-norm, tensor product}
\subjclass[2010]{46L06, 17B05}
\begin{abstract}
 For a locally compact Hausdorff space $X$  and a \CS \ $A$ with only finitely many  closed ideals, we discuss a characterization of closed ideals of $C_0(X,A) $ in terms of closed ideals of $A$ and certain (compatible) closed subspaces of $X$. We further use this result to prove that a closed ideal of $C_0(X) \omin A$ is a finite sum of product ideals. 
 We also establish that for a unital \CS \ $A$, $C_0(X,A)$ has centre-quotient property if and only if $A$ has 
centre-quotient property.
As an application, we characterize the closed Lie ideals of $C_0(X,A)$ and identify all closed Lie ideals of $ C_0(X) \omin B(H) $, $H$ being a separable Hilbert space.
\end{abstract}
\maketitle
\vspace{-4mm}
\section{Introduction}
Let $A\ot^\alpha B$ denote the completion of the algebraic tensor product of two C$^*$-algebras $A$ and $B$ under an algebra cross norm $\Vert\cdot\Vert_\alpha$. A natural question arises whether the closed ideals of $A \ota B$ can be identified in terms of the closed ideals of $A$ and $B$ or not. It is known that the closed ideals of the Banach algebras $A \oh B$, $A \obp B$ and $A \oop B$ are directly related to the closed ideals of $A$ and $B$, where $ \oh, \obp$ and $\oop$ are the Haagerup tensor product, Banach space projective tensor product and operator space projective tensor product, respectively. Interestingly, if either $A$ or $B$ possesses finitely many closed ideals then every closed ideal of these spaces is a finite sum of product ideals (see, for instance, \cite{sinc,vnr,knj}). However, in 1978,  Wassermann \cite{was} established an astonishing result that not every closed ideal of $B(H) \omin B(H)$ is a finite sum of product ideals, $\omin$ being the minimal $C^*$-tensor norm.
Recall that for every $x \in A \ot B$, $\| x \|_{\text{min}}= \text{sup} \{ \| (\pi_1 \ot \pi_2)(x)\| \}$, where $\pi_1$ and $\pi_2$ run over all representations of $A$ and $B$ respectively (see \cite{tak} for details).
In the present article, we prove that this anamoly can be removed by assuming one of the $C^*$-algebras to be commutative with the help of a totally different technique than those used for $\oh$, $\obp$ and $\oop$.

A Banach algebra  $B$ naturally imbibes a Lie algebra structure with the Lie bracket given by $[a,b] = ab-ba$ for every $a,b \in B$.
A closed subspace $L$ of  $B$ is said to be a \textit{Lie ideal} if $[B,L] \seq L$ where $[B,L] = \text{span} \{[b,l] : b \in B, l \in L  \}$. The closed Lie ideals for $C^*$-algebras are extensively studied, one may refer to the expository article  \cite{marcoux2010} for details. Recently some research has been done to identify the closed Lie ideals for the various tensor products of $C^*$-algebras. In \cite[Section 5]{rnv}, \cite[Section 4]{brv}) the closed Lie ideals of $C_0(X) \omin A$  have been characterized in terms of closed subspaces of $X$, $X$ being a locally compact Hausdorff space and $A$ being a simple $C^*$-algebra with at most one tracial state. However, if $A$ is not simple nothing is known about the closed Lie ideals of such spaces. In this article, we discuss a characterization of closed Lie ideals of $C_0(X,A)$, for any \CS \ $A$.

We present a  brief summary of the main results of the article. In Section 2, we first establish an appropriate (surjective) correspondence between a class of closed subspaces of $X$ and the closed ideals of $C_0(X,A)$, for any \CS \ $A$. Interestingly, this correspondence turns out to be bijective if $A$ has finitely many closed ideals. We use this correspondence to identify the image of a closed ideal of $C_0(X,A)$ in $C_0(X) \omin A$ under the canonical isomorphism. This will pave our way to establish  that every closed ideal of $C_0(X) \omin A$ is a finite sum of product ideals, where $X$ is a locally compact Hausdorff space and $A$ is a \CS \ with finitely many closed ideals.
In Section 3, we characterize the closed Lie ideals of $C_0(X,A)$ in terms of some closed subspaces of $X$.
To obtain a better picture of closed Lie ideals of $C_0(X) \omin A$, we prove that if $A$ is unital then the
centre-quotient property of $A$ passes to $C_0(X,A)$ and vice-versa.
As an application we establish an interesting result that a closed subspace $L$ of $C_0(X) \omin B(H)$ is a Lie ideal 
if and only if there exist closed subspaces $S_1,S_2$ of $X$ with $S_1 \seq S_2$ and a closed subspace $K$ of 
$C_0(X) \ot \C 1$ such that $L = \ol{J(S_1) \ot K(H) + J(S_2) \ot B(H) + K}$, where $H$ is a separable Hilbert space and 
for $F \subseteq X$,  $J(F) := \{ f \in C_0(X): f(F) \seq \{0\} \}$.

\section{Closed ideals of $C_0(X) \omin A$}
Let $X$ be a locally compact Hausdorff space and $A$ be any $C^*$-algebra. It is a well known fact that there is a bijective correspondence between the closed subspaces of $X$ and the closed ideals of $C_0(X)$ given by $F \leftrightarrow J(F)$. However, if we move from complex valued functions to the vector valued functions, such a correspondence is not known. Although, in the literature, it is established that every closed ideal of $C_0(X,A)$ is of the form $\{ f \in C_0(X,A): f(x) \in I_x, \, \forall x \in X \}$ where for every $x \in X$, $I_x$ is a closed ideal of $A$  \cite[V.26.2.1]{naim}, but this description fails to be fruitful while moving from $C_0(X,A)$ to $C_0(X)\omin A$ in order to determine the closed ideals.

We first generalize the former notion to the continuous vector valued functions by establishing a correspondence between the closed subspaces of X and closed ideals of $C_0(X,A)$. This correspondence will further enable us to characterize closed ideals of $C_0(X) \omin A$ in terms of closed ideals of $A$ and subspaces of $X$,  when $A$ has finitely many closed ideals. This is due to the fact that there exists an isometric $\ast$-isomorphism $\tilde{\varphi} : C_0(X) \omin A \rightarrow C_0(X,A)$, which takes $f \ot a$ to $af$ for every $f \in C_0(X)$ and $a \in A$, where $(af)(x)= f(x) a$ (see, \cite[Theorem 4.14 (iii)]{tak}, \cite[Proposition 1.5.6]{kani}).

Let us first fix some notations for further use. For any $t \in \N$, the set $\{1,2,3, \dots, t\}$ is be denoted by $\N_t$.
The spaces $C_b(X,A)$ and $C_c(X,A)$, as usual, denote the \CSS \ of all bounded continuous functions and compactly supported continuous functions, respectively, from $X$ to $A$ endowed with sup norm. For a non-unital \CS\ $A$, $\tilde{A}$ will denote its unitization.
For a locally compact Hausdorff space $X$ and any function $g \in C_0(X)$, we define $\hat{g} \in C_0(X,\tilde{A})$ (resp.,  $\hat{g} \in C_0(X,A)$)  by $\hat{g}(x) = g(x) 1$, where $1$ is the unit of $\tilde{A}$ (resp., of $A$) if $A$ is non unital  (resp., if $A$ is unital).
For any $a \in A$, we denote by $a'$ the constant function in $C_b(X,A)$ such that $a'(x)=a$ for every $x \in X$.

For an indexing set $\Delta$, let $S= \{ S_i \}_{i \in \Delta}$ and $T=\{ T_i \}_{i \in \Delta}$ be collections of subspaces of some sets $Y$ and $Z$.
We define \textit{$S$ to be compatible with $T$} if whenever for some subset $\gamma$ of $\Delta$, $\cap_{j \in \gamma} T_j = T_i$ for some $i \in \Delta$, then $\cap_{j \in \gamma} S_j = S_i$.
For a locally compact Hausdorff space $X$, $\alpha \in \Delta$ and $T$ as above, define a collection $\mcal{T}_{T}^{\alpha} := \{ S = \{S_i \}_{i \in \Delta} : S_i$ is a closed subspace of $X$ for every $i \in \Delta,\ S \, \text{is compatible with} \ T \ \text{and} \ S_{\alpha}=X \}$.
\begin{theorem}\label{infideal}
Let $X$ be a locally compact Hausdorff space, $A$ be a \CS \  and $\mcal{I} = \{I_i \}_{i \in \Delta}$ be the collection of all closed ideals of $A$ with $I_{\beta}= A$.
Then there exists a surjection $\theta$ from $ \mcal{T}_{\mcal{I}}^{\beta}$ into $\mcal{K}$, the set of all closed ideals of $C_0(X,A)$.
\end{theorem}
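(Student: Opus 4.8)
The plan is to write down $\theta$ explicitly and then verify that it lands in $\mcal{K}$ and is onto, the surjectivity being the substantive part. Given $S = \{S_i\}_{i \in \Delta} \in \mcal{T}_{\mcal{I}}^{\beta}$, for each $x \in X$ I would set $I_x^S := \bigcap \{ I_i : x \in S_i \}$; since $S_\beta = X$ and $I_\beta = A$, the defining family is nonempty and $I_x^S$ is a closed ideal of $A$. I then define
\[
\theta(S) := \{ f \in C_0(X,A) : f(x) \in I_x^S \ \text{for all } x \in X \} = \{ f \in C_0(X,A) : f(x) \in I_i \ \text{whenever } x \in S_i \}.
\]

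First I would check that $\theta(S) \in \mcal{K}$. This is routine: writing $\theta(S) = \bigcap_{i \in \Delta} \{ f : f(x) \in I_i \ \text{for all } x \in S_i \}$ exhibits it as an intersection of sets each of which is a two-sided ideal (because $I_i$ is an ideal and multiplication in $C_0(X,A)$ is pointwise) and is closed (because $I_i$ is closed and uniform convergence forces pointwise convergence). Hence $\theta$ maps $\mcal{T}_{\mcal{I}}^{\beta}$ into $\mcal{K}$.

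The heart of the argument is surjectivity. Let $J \in \mcal{K}$. By the description of closed ideals of $C_0(X,A)$ recalled above (\cite[V.26.2.1]{naim}), there is a family $\{ I_x \}_{x \in X}$ of closed ideals of $A$ with $J = \{ f \in C_0(X,A) : f(x) \in I_x \ \text{for all } x \}$, where $I_x$ is the fiber of $J$ at $x$. For each $i \in \Delta$ I would define $S_i := \{ x \in X : I_x \seq I_i \}$ and put $S := \{ S_i \}_{i \in \Delta}$. The identity $\bigcap_{j \in \gamma} S_j = \{ x : I_x \seq \bigcap_{j \in \gamma} I_j \}$ shows at once that $S$ is compatible with $\mcal{I}$ and that $S_\beta = X$ (as $I_\beta = A$); moreover, since each $I_x$ itself occurs in the list $\mcal{I}$, we get $I_x^S = \bigcap \{ I_i : I_x \seq I_i \} = I_x$, whence $\theta(S) = J$. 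Thus everything reduces to showing $S \in \mcal{T}_{\mcal{I}}^{\beta}$, i.e.\ that each $S_i$ is closed.

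This last point, which I expect to be the main obstacle, amounts to a lower-semicontinuity property of $x \mapsto I_x$. To prove $X \setminus S_i$ is open, fix $x_0$ with $I_{x_0} \not\seq I_i$ and choose $f \in J$ with $f(x_0) \notin I_i$ (possible because the fiber of $J$ at $x_0$ is $I_{x_0}$, which is not contained in $I_i$). Composing with the quotient $\ast$-homomorphism $q_i : A \to A/I_i$, the map $x \mapsto \norm{q_i(f(x))}$ is continuous and strictly positive at $x_0$, hence strictly positive on a neighborhood $U$ of $x_0$. For every $x \in U$ we then have $f(x) \in I_x \setminus I_i$, so $I_x \not\seq I_i$ and $U \seq X \setminus S_i$. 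This establishes that $S_i$ is closed, completes the verification that $S \in \mcal{T}_{\mcal{I}}^{\beta}$, and thereby shows that $\theta$ is onto $\mcal{K}$.
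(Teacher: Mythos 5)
Your proof is correct, but it follows a genuinely different route from the paper's. The paper's argument is self-contained: it defines $S_i = \bigcap_{f \in J} f^{-1}(I_i)$ (so closedness of each $S_i$ is automatic), notes $J \seq J(S)$, and then does the real analytic work of proving that $J$ is \emph{dense} in $J(S)$ --- approximating $f \in J(S)$ pointwise by elements of $J$ (via the closed ideal generated by $\{g(x) : g \in J\}$) and patching these local approximants together with a partition of unity on the one-point compactification $\tilde{X}$. You instead outsource all of that analysis to the fiber description of closed ideals of $C_0(X,A)$ from \cite[V.26.2.1]{naim} --- a result the paper quotes just before this theorem but deliberately does not use in its proof --- after which only lattice bookkeeping remains: since every fiber $I_x$ is itself a member of $\mcal{I}$, you get $I_x^S = I_x$ and hence $\theta(S) = J$ exactly, with no approximation needed. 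Note also that your $S_i = \{x : I_x \seq I_i\}$ is extensionally the same set as the paper's $\bigcap_{f \in J} f^{-1}(I_i)$; had you written it in that form, your lower-semicontinuity argument (which is correct) would have been unnecessary, since preimages of closed sets under continuous maps are closed. Two small points you should make explicit: the cited result guarantees \emph{some} family of closed ideals $\{I_x\}$ with $J = \{f : f(x) \in I_x \ \forall x\}$, and such a family need not consist of the fibers; but a trivial double inclusion shows that the fibers then satisfy the same identity, and the fibers are themselves closed ideals of $A$ because evaluation at $x$ restricted to the C$^*$-algebra $J$ is a $*$-homomorphism and therefore has closed range (ideal-ness follows from a bump-function argument). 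With those remarks inserted, your argument is complete and shorter than the paper's, at the price of resting on the Naimark theorem as a black box; the paper's partition-of-unity technique essentially re-proves that fiber description and keeps the treatment self-contained.
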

\begin{proof}
Define $\theta: \mcal{T}_{\mcal{I}}^{\beta} \rightarrow \mcal{K}$ by $\theta(S) = J(S)$ where $J(S) = \lbrace f \in C_0(X,A): f(S_i) \seq I_i, \ \forall i \in \Delta \rbrace$.
Clearly $\theta$ is well defined.
To see that $\theta$ is onto, consider $J \in \mcal{K}$.
For each $i \in \Delta$, set $S_i = \cap_{f \in J}{f^{-1}(I_i)}$.
%In simple words, for every $i \in \Delta$, the set $S_i$ is the largest closed subset of $X$ such that every element of $J$ takes $S_i$ into $I_i$.
If $I_i = \cap_{j \in \gamma} I_j$ for some subset $\gamma$ of $\Delta$, then $$S_i = \cap_{f \in J} f^{-1} (I_i) = \cap_{f \in J} f^{-1} (\cap_{j \in \gamma} I_j) = \cap_{f \in J}  \cap_{j \in \gamma} f^{-1} (I_j) =  \cap_{j \in \gamma} \cap_{f \in J}  f^{-1} (I_j) = \cap_{j \in \gamma} S_j,$$
so that $S =\{S_i\}_{i \in \Delta}$ is compatible with $\mcal{I}$. Since, $S_{\beta} = \cap_{f \in J} f^{-1} (I_{\beta}) = \cap_{f \in J} f^{-1} (A) = X$, we obtain $S \in \mcal{T}_{\mcal{I}}^{\beta}$. Clearly $J \subseteq J(S)$, so it is sufficient to prove that  $J$ is dense in $J(S)$.
For this, let $f \in J(S)$ be non-zero and $\epsilon > 0$ be arbitrary.

We first claim that for any $x \in X$, there exists an element $h_x \in J$ such that $\norm{ h_x(x) - f(x) } \leq \frac{\epsilon}{3} $.
Indeed, if $\gamma' = \lbrace i \in \Delta : x \in S_i \rbrace$, then $x \in \cap_{i \in \gamma'}S_i$ and $f(x) \in \cap_{i \in \gamma'}I_i = I_j$ for some $j \in \Delta$.
Let $I_r$ denote the closed ideal of $A$ generated by the set $\lbrace g(x) : g \in J \rbrace$.
Then $x \in S_r $ which implies that $f(x) \in I_r$. Hence $\norm{ \sum_{i=1}^{k} a_i g_i(x) b_i - f(x)} \leq \frac{1}{3} \epsilon$ for some $a_1, a_2, \dots a_k, b_1, b_2, \dots b_k$ in $A$ (resp., in $\tilde{A}$) if $A$ is unital  (resp., if $A$ is non unital). From \cite[Corollary 4.2.10]{knr}, we know that $J$ is a closed ideal of $C_0(X,\tilde{A})$, thus we have a function $h_x = \sum_{i=1}^{n} a_i' g_i b_i'$ in $J$ which satisfies the required condition. %Also $\norm{\tilde{ h}_x(x) - \tilde{f}(x) } \leq \frac{1}{3} \epsilon$.

Denote by $\tilde{X}= X \cup \lbrace \infty \rbrace$, the one point compactification of $X$.
For each $x \in X$, let $\tilde{h}_x$ and $\tilde{f}$ be the continuous extensions of $h_x$ and $f$ to $\tilde{X}$ which take $\infty$ to $0$,  and let $\tilde{h}_{x_0}$ be the continuous extension of $h_{x_0}$, the zero function, to $\tilde{X}$ such that $\tilde{h}_{x_0}(\infty) =0$.
%Then $h_x,f \in C(\tilde{X},A)$.
As $\tilde{h}_{x_0}(\infty) = \tilde{f}(\infty) = 0$, there exists a neighbourhood $V_{x_0}$ of $\infty$ in $\tilde{X}$ such that $\norm{\tilde{h}_{x_0}(y)-f(y) } \leq \frac{1}{3} \epsilon$ for every $y \in V_{x_0}$.  Further $\tilde{f}$ and $\tilde{h}_x$ are continuous at $x \in X$, so there exists a neighbourhood $V_x$ of $x$ such that  $\norm{h_x(y)-f(y)} \leq  \epsilon$ for every $y \in V_x$.

The collection $\lbrace V_{x_0} ,V_{x} \rbrace_{x \in \tilde{X}}$ is an open cover for $\tilde{X}$. 
As $\tilde{X}$ is compact, there is a finite subcover $\lbrace V_{x_0},V_{x_1}, V_{x_2} , \dots , V_{x_k}  \rbrace$  of $\tilde{X}$.
Then there exists a partition of unity $\lbrace g_0, g_1, g_2 ,  \dots g_k \rbrace$ subordinate to this subcover such that for every $i \in \lbrace 0,1,2, \dots ,k \rbrace$, $g_i \in C(\tilde{X})$, $0 \leq g_i \leq 1$, $\text{supp}(g_i) \seq V_{x_i}$ and for each $x \in \tilde{X}$, $\sum_{i=0}^{k}g_i(x) = 1$.
Now for $\tilde{h} =\sum_{i=0}^{k} \hat{g_i} \tilde{h}_{x_i} \in C(\tilde{X},A)$,  its restriction to $X$, given by $h =\sum_{i=0}^{k} \hat{g_i}_{|_X} h_{x_i}$, is an element in $J$. 
Hence for any $y \in X$, we have
\begin{eqnarray*}
\left \| \sum_{i=0}^{k} \hat{g_i}_{|_X}(y) h_{x_i}(y) - f(y)\right\| 
& = & \left \| \sum_{i=0}^{k}  \hat{g_i}(y) h_{x_i}(y) - \sum_{i=0}^{k} \hat{g_i}(y) f(y) \right\| \\ 
& \leq &  \sum_{i=0}^{k} \norm{ \hat{g_i}(y) } \norm{ f(y) - h_{x_i}(y)} \\ 
& \leq & \sum_{i=0}^{k}  g_i(y) \epsilon
\leq  \epsilon,  \\ 
\end{eqnarray*}
 thus $\norm{ h - f } \leq \epsilon $.
\end{proof}
\begin{remark}
	Note that the notation $J(S)$ given in the above theorem coincides with the previous notation of $J(F)$ by taking $S = \{F,X\}$ and $\mcal{I} = \{ \{ 0\}, A= \C \}$. 
\end{remark}

It is interesting to note that if $A$ possesses only finitely many closed ideals, then the above defined mapping $\theta$ turns out to be injective, hence providing a nice characterization of the closed ideals of $C_0(X,A)$. We would like to mention that the case when $A$ is a simple \CS \ is discussed in \cite[Proposition 4.1]{brv}, wherein we proved that every closed ideal of $C_0(X, A)$ is of the form
̃$\{f \in C_0(X, A):f(x) = 0,\, \forall x \in F \}$, for some closed subspace F of X.
\begin{theorem}\label{final}
Let $X$ be a locally compact Hausdorff space, $A$ be a \CS \ and $\mcal{I} = \{ I_1,I_2, \dots, I_n= A\}$,  $n \in \N$, be the set of all closed ideals of $A$.
Then the map $\theta$ from $\mcal{T}_{\mcal{I}}^{n}$ to the set of all closed ideals of $C_0(X,A)$ is a bijection. In particular, every closed ideal of $C_0(X,A)$ is precisely of the form $J(S) = \lbrace f \in C_0(X,A): f(S_i) \seq I_i, \ \forall i \in \n \}$, for a unique $S = \{S_i\}$ compatible with $\mcal{I}$.
\end{theorem}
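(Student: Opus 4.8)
Since Theorem \ref{infideal} already furnishes surjectivity of $\theta$ (here $\Delta = \n$ and the role of $\beta$ is played by $n$, as $I_n = A$), the whole content of the statement is the injectivity of $\theta$; the uniqueness clause is merely a restatement of injectivity. The plan is to exhibit an explicit left inverse for $\theta$. Given a closed ideal $J$ of $C_0(X,A)$, I would set $\Phi(J) = \{\Phi(J)_i\}_{i \in \n}$ with $\Phi(J)_i = \cap_{f \in J} f^{-1}(I_i)$, which is precisely the collection constructed in the proof of Theorem \ref{infideal}. It then suffices to prove $\Phi(\theta(S)) = S$ for every $S \in \mcal{T}^{n}_{\mcal{I}}$; granting this, $J(S) = J(S')$ forces $S = \Phi(J(S)) = \Phi(J(S')) = S'$. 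Concretely, writing $J = J(S)$, I must show $\cap_{f \in J} f^{-1}(I_i) = S_i$ for each $i \in \n$, and it is exactly here that the finiteness of $\mcal{I}$ enters.

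One inclusion needs no hypothesis: if $x \in S_i$, then every $f \in J(S)$ satisfies $f(S_i) \seq I_i$, so $f(x) \in I_i$; hence $x \in \cap_{f \in J} f^{-1}(I_i)$, giving $S_i \seq \cap_{f \in J} f^{-1}(I_i)$. The substance lies in the reverse inclusion, which I would prove contrapositively: given $x \notin S_i$, I will produce a single $f \in J(S)$ with $f(x) \notin I_i$. Put $\gamma_x = \{ j \in \n : x \in S_j \}$. Since $\mcal{I}$ is finite, the ideal $\cap_{j \in \gamma_x} I_j$ is itself a member of $\mcal{I}$, say $\cap_{j \in \gamma_x} I_j = I_{i_0}$, and compatibility of $S$ with $\mcal{I}$ then yields $\cap_{j \in \gamma_x} S_j = S_{i_0}$; in particular $x \in S_{i_0}$.

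The pivotal observation is that $I_{i_0} \not\seq I_i$. Indeed, were $I_{i_0} \seq I_i$, then $\cap_{j \in \gamma_x \cup \{i\}} I_j = I_{i_0} \cap I_i = I_{i_0}$, so a second application of compatibility would give $S_{i_0} = \cap_{j \in \gamma_x \cup \{i\}} S_j = S_{i_0} \cap S_i \seq S_i$, forcing the contradiction $x \in S_i$. Hence I may fix an element $a \in I_{i_0} \setminus I_i$. To build the desired function, I would consider $F = \cup \{ S_j : j \in \n,\ a \notin I_j \}$, a finite union of closed subsets of $X$, hence closed. Because $a \in I_{i_0} \seq I_j$ for every $j \in \gamma_x$, no set $S_j$ containing $x$ occurs in this union, so $x \notin F$. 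Using that $X$ is locally compact Hausdorff, Urysohn's lemma supplies $g \in C_c(X)$ with $0 \le g \le 1$, $g(x) = 1$ and $g|_F = 0$; I then define $f \in C_0(X,A)$ by $f(y) = g(y)a$.

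It remains to verify $f \in J(S)$ and $f(x) \notin I_i$. For each $j \in \n$ one checks $f(S_j) \seq I_j$: if $a \in I_j$ this is immediate since $I_j$ is an ideal, whereas if $a \notin I_j$ then $S_j \seq F$ and $f$ vanishes on $S_j$. Thus $f \in J(S)$, while $f(x) = g(x)a = a \notin I_i$, which completes the reverse inclusion and hence the proof of bijectivity. The main obstacle I anticipate is organising this separation argument cleanly — specifically the verification that $I_{i_0} \not\seq I_i$ through the double use of compatibility, together with the check that $x \notin F$ — since these are precisely the junctures at which the finiteness of the ideal lattice and the compatibility condition must interlock.
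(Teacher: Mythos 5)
Your proposal is correct and takes essentially the same approach as the paper: the core step in both is to use finiteness plus compatibility to produce an element $a \in \bigl(\bigcap_{j\,:\,x \in S_j} I_j\bigr) \setminus I_i$ and then a Urysohn function $g$ with $g(x)=1$ vanishing on the appropriate closed union of the $S_j$'s, so that $y \mapsto g(y)a$ lies in $J(S)$ yet takes the value $a \notin I_i$ at $x$. The only difference is packaging—the paper proves injectivity directly by exhibiting, for $S \neq S'$ with $x \in S_i \setminus S_i'$, a function lying in $J(S')$ but not in $J(S)$, whereas you frame the identical construction as the left-inverse (recovery) identity $S_i = \bigcap_{f \in J(S)} f^{-1}(I_i)$.
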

\begin{proof}
Let $S = \{S_i \}_{i \in \n}$ and $S' =\{S_i' \}_{i \in \n}$ be two distinct elements of $\mcal{T}_{\mcal{I}}^{n}$ so that $S_i \neq S_i'$ for some $i \in \n \setminus \{n\}$.
Without loss of generality, we may assume that there exists $x \in S_i \setminus S_i'$.
Define a non-empty subset $\gamma = \lbrace j \in \n : x \notin S_j' \rbrace$ of $\n$.
  By Urysohn's Lemma \cite[Theorem 2.12]{rud} , as $V_x = (\cup_{i \in \gamma} S_i')^c $ is an open set containing $x$, there exists $g \in C_c(X)$ such that $g(x) = 1$, $g(X) \seq [0,1]$ and $supp(g) \seq V_x$.
It is easy to see that $(\cap_{j \in \gamma^c}I_j )\setminus  I_i \neq \emptyset $, because if $ \cap_{j \in \gamma^c}I_j  \seq  I_i $ then $ \cap_{j \in \gamma^c}S_j' \seq S_i'$, which is not true as $x \in \cap_{j \in \gamma^c}S_j'$ but $x \notin S_i'$.
Let $a \in (\cap_{j \in \gamma^c}I_j )\setminus  I_i$.
Consider the function $h = a'\hat{g} \in C_0(X,A)$.
Observe that $h \notin J(S)$, since $x \in S_i$ but $h(x) = a'(x) \hat{g}(x) = a \notin I_i$. However, we assert that $h \in J(S')$ which proves $J(S) \neq J(S')$. For $k \in \gamma$,  $y \in S_k'$ implies $y \in V_{x}^c$, so that $h(y)= a \hat{g}(y) = 0 \in I_k$. Also, if $k \in \gamma^c$, then $h(y) = a \hat{g}(y) \in (\cap_{j \in \gamma^c}I_j ) \seq I_k$ for every $y \in S_k'$. 
\end{proof}
In the quest of proving the main result regarding the characterization of closed ideals of $C_0(X) \omin A$, we require few more ingredients. 
\begin{lem}\label{tensorandfunction}
Let $X$ be a locally compact Hausdorff space and $A$ be a \CS.
Then for a closed subspace $C$ of $A$ and a closed ideal $J(Y)$ of $C_0(X)$, $Y \seq X$ being closed, there is an isometric isomorphism of Banach spaces 
$$ \ol{J(Y) \ot C}^{\min} \cong \lbrace f \in C_0(X,A): f(Y) = \lbrace 0 \rbrace , f(X) \seq C \rbrace.$$
\end{lem}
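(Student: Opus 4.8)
The plan is to transport everything to the function side via the isometric $*$-isomorphism $\tilde{\varphi} : C_0(X)\omin A \to C_0(X,A)$, $\tilde{\varphi}(f\ot a)=af$, recalled before Theorem~\ref{infideal}, and to show that $\tilde{\varphi}$ identifies $\ol{J(Y)\ot C}^{\min}$ with the closed subspace $D := \lbrace f\in C_0(X,A): f(Y)=\lbrace 0\rbrace,\ f(X)\seq C\rbrace$. Since $\tilde{\varphi}$ is an isometric linear isomorphism of the ambient spaces, it carries the $\min$-closure of $J(Y)\ot C$ onto the sup-norm closure of $\tilde{\varphi}(J(Y)\ot C)$ in $C_0(X,A)$. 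Noting that $D$ is itself closed (as $C$ is closed and uniform limits preserve vanishing on $Y$), it therefore suffices to prove that $\tilde{\varphi}(J(Y)\ot C)$ is a dense subset of $D$; the restriction of $\tilde{\varphi}$ to $\ol{J(Y)\ot C}^{\min}$ then becomes the asserted isometric isomorphism.

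First I would verify the easy inclusion $\tilde{\varphi}(J(Y)\ot C)\seq D$. An elementary tensor $f\ot a$ with $f\in J(Y)$ and $a\in C$ maps to the function $x\mapsto f(x)a$, which vanishes on $Y$ and, $C$ being a subspace, takes values in $\C a\seq C$; hence a finite sum $\sum_i f_i\ot a_i$ maps into $D$ as well. In particular $\tilde{\varphi}\big(\ol{J(Y)\ot C}^{\min}\big)\seq D$.

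The substance of the argument is the reverse density statement. Fix a nonzero $f\in D$ and $\epsilon>0$. The key observation is that the essential support $K:=\lbrace x\in X: \norm{f(x)}\geq \epsilon\rbrace$ is compact (because $f\in C_0(X,A)$) and disjoint from $Y$ (because $f$ vanishes on $Y$), so $K$ is contained in the open set $X\setminus Y$. For each $x\in K$ I would use continuity of $f$ to choose an open neighbourhood $V_x\seq X\setminus Y$ with $\norm{f(y)-f(x)}<\epsilon$ for all $y\in V_x$. By compactness finitely many $V_{x_1},\dots,V_{x_m}$ cover $K$, and I would take a partition of unity subordinate to this cover (cf. \cite[Theorem 2.13]{rud}): functions $g_1,\dots,g_m\in C_c(X)$ with $0\leq g_i\leq 1$, $\mathrm{supp}(g_i)\seq V_{x_i}$, $\sum_{i} g_i\leq 1$ on $X$, and $\sum_i g_i=1$ on $K$. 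Because $\mathrm{supp}(g_i)\seq V_{x_i}\seq X\setminus Y$, each $g_i$ vanishes on $Y$, so $g_i\in J(Y)$, while $f(x_i)\in C$; consequently $h:=\sum_i f(x_i)g_i=\tilde{\varphi}\big(\sum_i g_i\ot f(x_i)\big)$ lies in $\tilde{\varphi}(J(Y)\ot C)$.

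It then remains to estimate $\norm{h-f}$. Writing $h(x)-f(x)=\sum_i g_i(x)\big(f(x_i)-f(x)\big)+\big(\sum_i g_i(x)-1\big)f(x)$ and using that $g_i(x)\neq 0$ forces $x\in V_{x_i}$, hence $\norm{f(x_i)-f(x)}<\epsilon$, the first group of terms is bounded by $\epsilon\sum_i g_i(x)\leq \epsilon$, while the last term vanishes for $x\in K$ and is bounded by $\norm{f(x)}<\epsilon$ for $x\notin K$. Thus $\norm{h-f}\leq 2\epsilon$, which establishes density and completes the identification. I expect the main obstacle to be precisely the requirement that the approximating scalar functions $g_i$ belong to $J(Y)$: this is what forces the argument to exploit that $f$ is uniformly small outside a compact set separated from $Y$, so that only the behaviour of $f$ on $X\setminus Y$ needs genuine approximation while its contribution near $Y$ is absorbed into the error term.
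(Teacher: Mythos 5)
Your proof is correct, and it shares the paper's overall strategy: transport everything through the isometric $*$-isomorphism $\tilde\varphi$, note the easy inclusion $\tilde\varphi(J(Y)\ot C)\seq D$, and prove density via a partition-of-unity approximant of the form $\sum_i g_i \ot f(x_i)$. Your execution, however, is genuinely simpler at the two technical points. The paper first replaces $g\in D$ by a compactly supported $h\in C_c(X,C)$ with $\norm{g-h}<\epsilon/2$; since this $h$ need not vanish on $Y$ but is only small there, the paper must manufacture its open cover of $\mathrm{supp}(h)$ from the sets $h^{-1}\big(B^{\times}_{\epsilon/2}(h(x_i))\setminus \ol{h(Y)}\big)$ precisely so that the scalar functions stay supported off $Y$, and it then needs a further correction layer (the set $V$ and the function $\tilde f$) to control the resulting partition of unity. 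You bypass both steps by approximating $f$ itself: the level set $K=\{x:\norm{f(x)}\ge\epsilon\}$ is compact because $f\in C_0(X,A)$ and is disjoint from $Y$ because $f$ vanishes there, so the neighbourhoods $V_x$ can be chosen inside the open set $X\setminus Y$ from the start, making $g_i\in J(Y)$ automatic; your two error terms (the oscillation term, bounded by $\epsilon\sum_i g_i\le\epsilon$, and the tail $(\sum_i g_i-1)f$, which vanishes on $K$ and is below $\epsilon$ off $K$) give $\norm{h-f}\le 2\epsilon$, which is all density requires. Your appeal to $\sum_i g_i\le 1$ on all of $X$ is legitimate, since Rudin's construction of the subordinate partition of unity yields exactly this, and the degenerate case $K=\emptyset$ is absorbed by the empty sum (take the zero tensor). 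In short: same skeleton, but using the $\epsilon$-level set of $f$ in place of the support of an auxiliary compactly supported approximant eliminates the paper's most delicate steps.
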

\begin{proof}
Denote by $J$ the closed subspace $\lbrace f \in C_0(X,A): f(Y) = \lbrace 0 \rbrace , f(X) \seq C \rbrace$ of $C_0(X,A)$, and $I=J(Y)$.
Let $\varphi$ denote the restriction of $\tilde{\varphi}$ to $\ol{I \ot C}^{\text{min}}$, where $\tilde{\phi}: C_0(X) \omin A \to C_0(X,A)$ is the isometric $*$-isomorphism as discussed earlier.
Then for $\sum_{j =1}^{n}f_j \ot c_j \in I \ot C$, we have  $\varphi(\sum_{j \in \n}f_j \ot c_j)(Y)  = \lbrace 0 \rbrace$, so that $\varphi (I \ot C) \seq J$.
Since $\varphi$ is an isometry, it is sufficient to prove that $\varphi(I \ot C)$ is dense in $J$.

Let $g \in J$ and $\epsilon > 0$ be arbitrary.
Since $J$ is also a closed subspace of $C_0(X,C)$ and $C_c(X,C)$ is dense in $C_0(X,C)$, there exists a function $h \in C_c(X,C)$ such that $\norm{{g - h}} < \epsilon/2 $.
Let $K:= \text{supp}(h)$, 
$B_r(b):=\{ c \in C : \norm{ c - b} < r \}$ and $B_r^{\times}(b):=B_r(b) \setminus \{0\}$, where $b \in C$ and $r>0$.
Since $\norm{h(y) } = \norm{g(y)-h(y)} < \epsilon/2 $ for every $y \in Y$, the collection $ \lbrace h^{-1}(B^{\times}_{\frac{\epsilon}{2} }(h(x)) \setminus \overline{h(Y)}): x \in K\setminus Y \rbrace$ $\cup  h^{-1}({ B_{\epsilon}(0))} $ forms an open cover of the compact set $K$. 
Fix a finite subcover, say, $ h^{-1}({B_{\epsilon}(0))}$ $ \cup \lbrace h^{-1}(
B^{\times}_{\frac{\epsilon}{2}}(h(x_i)) \setminus \overline{h(Y)})$ $: 1 \leq i \leq n \rbrace$.
Since $K$  is a compact subspace of a locally compact Hausdorff space $X$, there exists a partition of unity subordinate to this finite subcover, i.e. there exist functions $ f_0 , f_1, \dots, ,f_n$ in $C_c(X)$ such that $0 \leq f_i \leq 1$ for all $ 0 \leq i \leq n$, $\text{supp}(f_0) \seq U_0 := h^{-1}( B_{\epsilon}(0))$, $\text{supp}(f_i) \seq U_i := h^{-1}(
B^{\times}_{\frac{\epsilon}{2}}(h(x_i)) \setminus \overline{h(Y)})$ for all $1 \leq i \leq n$ and $\sum_{i = 0}^n f_i(x) = 1$  for $x \in K$(see \cite[Theorem 2.13]{rud}).

Let $ V=(\sum_{i = 0}^n f_i)^{-1}(0,3/2) $.
Then $V \cap (\cup_{i=0}^{n} U_i)$ is an open set containing $K$.
Pick $ \tilde{f} \in C_c(X)$ such that $\tilde{f}$ is  1 on $K$, $\text{supp}(\tilde{f}) \seq V \cap (\cup_{i=0}^{n} U_i)$ and $0 \leq \tilde{f} \leq 1 $.
Then for $ \tilde{f_i} = \tilde{f} f_i $, $\text{supp}(\tilde{f_i})  \seq V \cap U_i $ because $\text{supp}(f_i)  \seq U_i $ and $\text{supp}(\tilde{f})  \seq V $.
Now for  $x \in K$, we have $$\sum_{i = 0}^n \tilde{f_i}(x) = \sum_{i = 0}^n \tilde{f}(x) f_i(x) = \sum_{i = 0}^n f_i(x) = 1.$$
Also notice that $0 \leq \sum_{i = 0}^n \tilde{f_i} \leq 3/2 $ because for  $x \in V \cap (\cup_{i=0}^{n} U_i)$, $\sum_{i = 0}^n \tilde{f_i}(x) = \sum_{i = 0}^n \tilde{f}(x) f_i(x) \leq \sum_{i = 0}^n f_i(x) = 3/2$  and for $x \in (V \cap (\cup_{i=0}^{n} U_i))^c$, we have $\sum_{i = 0}^n \tilde{f_i}(x) = 0$ .

Now for $1 \leq i \leq n$, the open set $U_i$, and thus $V \cap U_i$ is disjoint from $Y$ so that $\sum_{i=1}^{n}  \tilde{f_i} \otimes h(x_i) \in I \ot C$. 
Fix  $x_0 \in K^c$, then for each $x \in X$
\vspace*{-2mm}
\begin{eqnarray*}
  \norm{ h(x) - \sum_{i = 1}^n \tilde{f_i}(x) h(x_i)} & =& \norm{ h(x) \sum_{i = 0}^n
  \tilde{f_i} (x) - \sum_{i = 0}^n \tilde{f_i} (x) h(x_i)} \\ & \leq & \sum_{i = 0}^n  \norm{ h(x)  - h(x_i)} \tilde{f_i} (x) \\
 & = & \sum_{i \ :\ x \in U_i \cap V} \norm{ h(x)  - h(x_i)} \tilde{f_i} (x) \ \ \ \  \ \ \quad (\text{since supp}(\tilde{f_i}) \seq U_i \cap V) \\
& <& \epsilon.
\end{eqnarray*}
Hence we obtain $\norm{ g - \varphi(\sum_{i=1}^{n} \tilde{f_i} \ot h(x_i))} < \frac{3 \epsilon}{2}$, proving that $\varphi(I \ot C)$ is dense in $J$.
\end{proof}

As a consequence of the above result, we have an interesting observation which identifies certain closed ideals of $C_0(X,A)$ with some closed ideals of $C_0(X) \omin A$.
\begin{cor}\label{idealfromset}
	Let $Y$ be a closed subspace of a locally compact Hausdorff space $X$. For any closed ideal $I$ of a \CS \ $A$, we have
	$$C_0(X) \omin I + J(Y) \omin A = \{f \in C_0(X,A): f(Y) \seq I \}$$ %in $C_0(X) \omin A$.
\end{cor}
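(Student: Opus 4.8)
The plan is to transport both ideals on the left-hand side into $C_0(X,A)$ through the isometric $*$-isomorphism $\tilde{\varphi}:C_0(X)\omin A\to C_0(X,A)$ and to recognise them as concrete spaces of functions via Lemma \ref{tensorandfunction}. Applying that lemma with the closed subspace $C=I$ and with the empty (closed) set in place of $Y$, for which $J(\emptyset)=C_0(X)$, gives $\tilde{\varphi}\big(C_0(X)\omin I\big)=\{f\in C_0(X,A):f(X)\seq I\}=:P$; applying it with $C=A$ and the ideal $J(Y)$ gives $\tilde{\varphi}\big(J(Y)\omin A\big)=\{f\in C_0(X,A):f(Y)=\{0\}\}=:Q$. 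Since $C_0(X)\omin I$ and $J(Y)\omin A$ are closed ideals of the \CS\ $C_0(X)\omin A$, and the sum of two closed ideals of a \CS\ is again a closed ideal, the left-hand side is a closed ideal whose image under $\tilde{\varphi}$ is $P+Q$. Because $\tilde{\varphi}$ is bijective, it therefore suffices to prove $P+Q=N$ inside $C_0(X,A)$, where $N:=\{f\in C_0(X,A):f(Y)\seq I\}$.

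The inclusion $P+Q\seq N$ is immediate: every $f\in P$ satisfies $f(Y)\seq f(X)\seq I$, every $f\in Q$ satisfies $f(Y)=\{0\}\seq I$, and $N$ is a linear subspace. For the reverse inclusion I would fix $f\in N$ and note that $f|_Y$, being the restriction of a $C_0(X,A)$-function to the closed set $Y$ with values in $I$, lies in $C_0(Y,I)$. The whole matter then reduces to producing an element $p\in P=C_0(X,I)$ with $p|_Y=f|_Y$: granting this, $q:=f-p$ vanishes on $Y$, so $q\in Q$ and $f=p+q\in P+Q$.

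The existence of such an extension is precisely the surjectivity of the restriction map $\rho:C_0(X,I)\to C_0(Y,I)$, $\rho(g)=g|_Y$, and this is the one genuinely non-routine step. Rather than invoking a vector-valued Tietze theorem (which may fail for an infinite-dimensional $I$ over a non-metrizable $X$), I would argue \CS-theoretically: identify $\rho$ with $\mathrm{res}\ot\mathrm{id}_I$ on $C_0(X)\omin I\to C_0(Y)\omin I$, where $\mathrm{res}:C_0(X)\to C_0(X)/J(Y)\cong C_0(Y)$ is the canonical surjective quotient $*$-homomorphism. By functoriality of $\omin$, $\rho$ is a $*$-homomorphism between \CSS; its range is then a \CS, hence closed, and it contains the dense subalgebra $C_0(Y)\ot I$, forcing $\rho$ to be onto. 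This yields the required $p$ and completes the proof. I expect essentially all the care to be concentrated here, in justifying surjectivity through the closed-range property of $*$-homomorphisms together with the functoriality of the minimal tensor product; the two inclusions and the bookkeeping through $\tilde{\varphi}$ are routine by comparison.
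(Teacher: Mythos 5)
Your proof is correct, but it follows a genuinely different route from the paper. The paper invokes its Theorem~\ref{infideal}: it writes both sides as $J(S)$ and $J(S')$ for compatible families $S,S'\in\mcal{T}_{\mcal{I}}^{\beta}$ (with $S_i=\cap_{f\in J}f^{-1}(I_i)$), and then proves $S=S'$ by a three-case analysis on how each closed ideal $I_i$ of $A$ sits relative to $I$ (properly below, properly above, or incomparable), using Urysohn-type constructions of functions that separate the candidate subspaces. You instead argue by direct decomposition: after transporting everything through $\tilde{\varphi}$ and Lemma~\ref{tensorandfunction} (applied once with $Y=\emptyset$, $C=I$, and once with $C=A$ --- both legitimate instances of the lemma), you reduce the nontrivial inclusion to extending $f|_Y\in C_0(Y,I)$ to an element of $C_0(X,I)$, and you obtain this extension from the surjectivity of $\mathrm{res}\ot\mathrm{id}_I:C_0(X)\omin I\to C_0(Y)\omin I$, which follows from functoriality of $\omin$, the closed-range property of $*$-homomorphisms, and density of $C_0(Y)\ot I$. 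All of these ingredients are standard and the steps check out, including the facts that $f|_Y$ vanishes at infinity on the closed set $Y$ and that the sum of two closed ideals of a \CS\ is closed. What your approach buys: it is shorter and more self-contained, it produces an exact (not merely dense) decomposition $f=p+q$, and en route it proves a vector-valued Tietze-type extension theorem for $I$-valued $C_0$-functions. What the paper's approach buys: it stays entirely within the correspondence machinery $\theta$ that the paper develops (no appeal to functoriality of $\omin$ or quotient identifications), and the case-analysis technique it rehearses is reused in the proof of Theorem~\ref{fin sum}, so the corollary doubles as a warm-up for the main theorem.
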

\begin{proof}
Let $\mcal{I} = \{I_i \}_{i \in \Delta}$ be the set of all closed ideals of $A$ with $I_\beta = A$, for some $\beta \in \Delta$, and set $I = I_t$.
	If $t = \beta$ then the result is trivial.
	Otherwise, let $J_1 = C_0(X) \omin I_t + J(Y) \omin A$ and $J_2 = \{f \in C_0(X,A): f(Y) \seq I_t \}$.
	Then, by \Cref{infideal}, there exist elements $S = \{S_i \}_{i \in \Delta} $ and $S' = \{S_i' \}_{i \in \Delta}$ in $ \mcal{T}_{\mcal{I}}^{\beta}$ such that $J_1 = J(S)$ and $J_2 = J(S')$. It is sufficient to prove that $S = S'$.
	
	We first mention a common trick used in the proof.
	For any $x \in X$ and a closed subspace $F$ of $X$ with $x \notin F$, Urysohn's Lemma implies that there exists $f\in C_c(X)$ such that $f(x)= 1$ and $f(F)=0$.
	Then for any fixed $a \in A$ and any $y \in X$, there exists a function $g (y):= f(y) a $ in $C_0(X,A)$ such that $g(x)= a$ and $g$ vanishes on $F$.
	
	We now claim that $S_t = \cap_{f \in J_1} f^{-1}(I_t) = Y$.
	For $f \in J_1$,  $f = f_1 + f_2$ for some $f_1 \in C_0(X) \omin I_t$ and $f_2 \in J(Y) \omin A$. Thus, for any $y \in Y$,  $f(y) = f_1(y)+f_2(y) \in I_t$ as $f_2(y)=0$ by \Cref{tensorandfunction}, so that $Y \seq S_t$. 	For the reverse containment assume that $Y \subsetneq S_t$. Pick $a \in A \setminus I_t$ and $x \in S_t \setminus Y$, then there exists a function in $C_0(X,A)$ which vanishes on $Y$ and maps $x$ to $a$ which is a contradiction to the definition of $S_t$.
	%Hence $S_t = Y$.
	On the similar lines, using the fact that $ S_t' = \cap_{g \in J_2} g^{-1}(I_t)$, one can easily deduce that $S_t =Y = S_t'$. 
	
	Now fix $i \in \Delta$ with $i \neq \beta, i \neq t$. Note that $J_2 = \cap_{i \in \Delta}\{f \in C_0(X,A): f(S_i') \seq I_i\}$, so that $G_t:= \{f \in C_0(X,A): f(S_t') \seq I_t \} \seq \{f \in C_0(X,A): f(S_i') \seq I_i \} = G_i$ (say), for every $i \in \Delta$.
		
	Case(i): $I_i \subsetneq I_t$, then $S_i' = \emptyset = S_i$.
	Because for $y \in S_i' \seq S_t'$ and $a \in I_t \setminus I_i$, there exists a function in $C_0(X,A)$ which takes $y$ to $a$.	Then such a function is in $G_t$ but not in $G_i$.	Also, if there exists an $x \in S_i \seq S_t = Y$, then $a'g$ as defined above will be a function in $C_0(X) \omin I_t \subset J_1$ which takes an element $x$ of $S_i$ to $a$ which does not belong to $I_i$, which is a contradiction to the definition of $S_i$.
	
	Case(ii): $I_t \subsetneq I_i$, then  $S_i= S_t = S_t' = S_i'$.
	To see this, if $S_t'$ is properly contained in $S_i'$, then for  $x \in S_i' \setminus S_t'$ and $a \notin I_i$, there exists a function in $C_0(X,A)$ which takes $x$ to $a$ and $S_t'$ to 0. This function belongs to $G_t$ but does not belong to $G_i$, which is a contradiction. Similarly, if $S_t$ is properly contained in $S_i$, then for $x \in S_i \setminus S_t$ and $a \notin I_i$, there is a function in $J(Y) \omin A \subset J_1$ which takes $x$ outside $I_i$ which contradicts the definition of $S_i$.

	Case(iii): $I_i$ is neither a subset nor a superset of $I_t$, then we claim that $S_i = S_i' = \emptyset$. If $I_j =I_t \cap I_i $, then $I_j\subsetneq I_t$ so that by Case(i), $S_t \cap S_i  = S_j = \emptyset=S_j' = S_t' \cap S_i'$.	Now, for $x \in S_i'$, $x \notin S_t'$, as argued in Case (ii), we obtain that $G_t$ is not contained in $G_i$, which is a contradiction, thus $S_i' =\emptyset$.
	Similarly, for $x \in S_i$, $x$ is not a member of $Y = S_t$. So for any $a \in I_t \setminus I_i$, applying the technique mentioned in the beginning, we get a function $g$ in $J(Y) \omin A \subseteq J_1$ such that $g(x) \notin I_i$, a contradiction.
	
	This proves that $S = S'$ and hence $J_1 = J_2$. 
\end{proof}

We are now ready to prove the main result of this section. Note that a \textit{product ideal} is a closed ideal of the form $I \omin J$, where $I$ and $J$ are closed ideals of $A$ and $B$, respectively. 
\begin{theorem}\label{fin sum}
Let $X$ be a locally compact Hausdorff space and $A$ be a \CS \, with finitely many closed ideals, say, $I_1,I_2, \dots, I_n$ with $I_1=\{0\}$ and $I_n= A$. Then for any closed ideal $K$ of $C_0(X) \omin A$, there exists $S = \{S_i \}_{i \in \n} \in \mcal{T}_{\mcal{I}}^{n}$, where $\mcal{I} =\{I_i\}_{i \in \n}$, such that
\begin{equation*}
\hspace{4cm} K = \sum_{j=2}^{n}  J(\cup_{k \in \gamma_j}S_k) \omin I_j,
\end{equation*}
where $ \gamma_j = \{ i \in \n : I_j \nsubseteq \ I_i \}$, for every $j \in \lbrace 2,3, \dots , n \rbrace$.

In particular, every closed ideal of $C_0(X) \omin A$ is a finite sum of product ideals.
\end{theorem}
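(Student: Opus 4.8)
The plan is to transport everything across the canonical isometric $*$-isomorphism $\tilde{\varphi}\colon C_0(X)\omin A \to C_0(X,A)$ and then compare closed ideals of $C_0(X,A)$ through the bijection of \Cref{final}. First I would note that $\tilde{\varphi}(K)$ is a closed ideal of $C_0(X,A)$, so \Cref{final} furnishes a unique $S=\{S_i\}_{i\in\n}\in\mcal{T}_{\mcal{I}}^{n}$ with $\tilde{\varphi}(K)=J(S)=\{f\in C_0(X,A):f(S_i)\seq I_i,\ \forall i\in\n\}$; this $S$ is exactly the tuple claimed in the statement. Writing $Y_j=\cup_{k\in\gamma_j}S_k$ (a finite, hence closed, union) and $K'=\sum_{j=2}^n J(Y_j)\omin I_j$, the sum of finitely many closed ideals of the $C^*$-algebra $C_0(X)\omin A$ is again a closed ideal, so it will suffice to prove $K=K'$, equivalently $\tilde{\varphi}(K')=J(S)$.

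Next I would identify the summands. Applying \Cref{tensorandfunction} with $C=I_j$ and $Y=Y_j$ gives $\tilde{\varphi}\bigl(J(Y_j)\omin I_j\bigr)=L_j:=\{f\in C_0(X,A):f(Y_j)=\{0\},\ f(X)\seq I_j\}$, and since $\tilde{\varphi}$ is an isometric isomorphism, $\tilde{\varphi}(K')=\sum_{j=2}^n L_j$. As $J(S)=\theta(S)$ and $\theta$ is a bijection, it is enough to show that the tuple $S'=\{S_i'\}$ recovered from the closed ideal $\sum_j L_j$ by the formula $S_i'=\cap_{f\in\sum_j L_j}f^{-1}(I_i)$ (exactly as in the proofs of \Cref{infideal,final}) coincides with $S$. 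The inclusion $S_i'\supseteq S_i$ is the easy containment $\sum_j L_j\seq J(S)$: for $f\in L_j$ and any $i$, either $I_j\seq I_i$, giving $f(S_i)\seq f(X)\seq I_j\seq I_i$, or $I_j\nsubseteq I_i$, i.e. $i\in\gamma_j$, whence $S_i\seq Y_j$ and $f(S_i)=\{0\}\seq I_i$.

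The crux is the reverse inclusion $S_i'\seq S_i$, which I would establish pointwise using compatibility. Fix $x\notin S_i$ and set $I_m:=\cap_{k:\,x\in S_k}I_k$; since $A$ has finitely many closed ideals this intersection is some $I_m\in\mcal{I}$, and compatibility of $S$ with $\mcal{I}$ forces $\cap_{k:\,x\in S_k}S_k=S_m$, so in particular $x\in S_m$. I claim $I_m\nsubseteq I_i$: otherwise $\cap_{k\in\gamma}I_k=I_m$ for $\gamma=\{k:x\in S_k\}\cup\{i\}$, and compatibility would yield $S_m\cap S_i=\cap_{k\in\gamma}S_k=S_m$, i.e. $S_m\seq S_i$, contradicting $x\in S_m\setminus S_i$. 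In particular $I_m\neq\{0\}$, so $m\ge 2$ and $L_m$ is one of the summands; moreover $k\in\gamma_m$ forces $I_m\nsubseteq I_k$, hence (contrapositively) $x\notin S_k$, so $x\notin Y_m$. Choosing $a\in I_m\setminus I_i$ and, by Urysohn's Lemma, $g\in C_c(X)$ with $g(x)=1$ and $g(Y_m)=\{0\}$, the function $a'\hat{g}$ lies in $L_m$ and satisfies $(a'\hat{g})(x)=a\notin I_i$, so $x\notin S_i'$. This proves $S'=S$, hence $\tilde{\varphi}(K')=J(S)=\tilde{\varphi}(K)$ and therefore $K=K'=\sum_{j=2}^n J(Y_j)\omin I_j$, exhibiting $K$ as a finite sum of product ideals. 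The main obstacle is precisely the compatibility argument forcing $I_m\nsubseteq I_i$, which is what makes the recovered data match; everything else is bookkeeping on top of \Cref{tensorandfunction,final}.
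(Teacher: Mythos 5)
Your proposal is correct, and its skeleton coincides with the paper's own proof: both invoke \Cref{final} to write the ideal as $J(S)$ for a unique compatible tuple $S$, use \Cref{tensorandfunction} to identify each summand $J(\cup_{k\in\gamma_j}S_k)\omin I_j$ with $L_j=\lbrace f\in C_0(X,A): f(\cup_{k\in\gamma_j}S_k)=\lbrace 0\rbrace,\ f(X)\seq I_j\rbrace$, and then verify, via the recovery formula $S_i'=\cap_{f}f^{-1}(I_i)$ from \Cref{infideal}, that the tuple recovered from $\sum_j L_j$ equals $S$; your easy inclusion is argued exactly as in the paper. Where you genuinely diverge is the crux, namely the inclusion $S_i'\seq S_i$. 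The paper proves the set identity $\cap_{r\in\alpha_i}(\cup_{k\in\gamma_r}S_k)=S_i$, where $\alpha_i=\lbrace j: I_j\nsubseteq I_i\rbrace$, by distributing the intersection over the unions so that each resulting term is an intersection of $S_k$'s, which compatibility converts into a single $S_m$, followed by a contradiction argument forcing $S_m\seq S_i$; the needed index $r\in\alpha_i$ with $x\notin\cup_{k\in\gamma_r}S_k$ is then extracted from this identity. You instead produce that index directly and pointwise: for $x\notin S_i$ you set $I_m=\cap_{k:\,x\in S_k}I_k$ (legitimate, since this is again a closed ideal and hence a member of $\mcal{I}$, the index set being nonempty because $S_n=X$), and compatibility, applied twice, yields $x\in S_m$ and $I_m\nsubseteq I_i$, while the definition of $\gamma_m$ forces $x\notin\cup_{k\in\gamma_m}S_k$; the final Urysohn construction $a'\hat{g}\in L_m$ is the same as the paper's. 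Your argument is cleaner in that it avoids the combinatorial expansion of intersections of unions altogether and isolates compatibility as the only working tool, at the cost of proving slightly less (existence of a suitable $m$ for each point, rather than the full set identity), which is all the theorem requires. The only points you pass over quickly---closedness of a finite sum of closed ideals in a $C^*$-algebra, and the fact that an arbitrary intersection of the $I_k$'s is again one of the $I_j$'s---are standard and easily supplied.
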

\begin{proof}
By \Cref{final}, there exists $S = \{S_i \}_{i \in \n} \in \mcal{T}_{\mcal{I}}^{n}$ such that $K = J(S)= \lbrace f \in C_0(X,A) : f(S_i) \seq I_i, i \in \n \rbrace$. Set $K' = \sum_{j=2}^{n}  J(\cup_{k \in \gamma_j}S_k) \omin I_j $, then by \Cref{tensorandfunction}, $K'$ can be considered as a closed ideal of $C_0(X,A)$.
By virtue of \Cref{infideal}, it is sufficient to prove that $S_i= \cap_{f \in K'}f^{-1}(I_i)$ for every $i \in \n$.

It is clear that $ S_n  = X = \cap_{f \in K'}f^{-1}(A)$. Fix $i \in \N_{n-1}$ and consider any $x \in S_i$.
%We first prove that $f(x) \in I_i$ for every $f \in K'$.
For $f \in K'$, $f= f_2 + f_3 + \dots + f_n$, where $f_r \in J(\cup_{k \in \gamma_r}S_k) \omin I_r  $ for every $r \in \lbrace 2,3, \dots , n \rbrace$.
Then for any such $r$, either $i \in \gamma_r$ or $i \in \gamma_r^c$.
If  $i \in \gamma_r$ then $f_r(x) = 0 \in I_i$.
If $i \in \gamma_r^c$ then $f_r(x) \in I_r \seq I_i$.
These two conclusions together imply that $S_i \seq \cap_{f \in K'}f^{-1}(I_i)$.

Next, pick $x \notin S_i$ and define $\alpha_i = \lbrace j \in \n : I_j \nsubseteq \ I_i \rbrace$. Note that $\alpha_i$ is non empty as $n \in \alpha_i$.
It is sufficient to prove the existence of a function $f \in K'$ such that $f(x) \notin I_i$.
We shall actually prove that such a function exists in the subset $\sum_{r \in \alpha_i}  J(\cup_{k \in \gamma_r}S_k) \omin I_r $ of $K'$. It is further enough to prove that there exists an $r \in \alpha_i$ such that $x \notin \cup_{k \in \gamma_r}S_k$, so that the required function $f$ exists in $J(\cup_{k \in \gamma_r}S_k) \omin I_r$.
In fact, by Urysohn's Lemma there exists a function $g \in C_c(X)$ such that $0 \leq g \leq 1$, $g(x) =1$ and $g(\cup_{k \in \gamma_r}S_k) = \{0\}$. Then by \Cref{tensorandfunction}, for $a \in I_r \setminus I_i$ (since $I_r \nsubseteq I_i)$,  the function $a' \hat{g}$ serves the purpose.
We claim that $\cap_{r \in \alpha_i} (\cup_{k \in \gamma_r}S_k) = S_i$, which will ensure the existence of such an $r$.

When $r \in \alpha_i$, we have $i \in \gamma_r$ and hence $S_i \seq \cap_{r \in \alpha_i} (\cup_{k \in \gamma_r}S_k)$.
We now prove the reverse inclusion.
Set $\alpha_i = \{r_1, r_2, \dots ,r_q \}$ and for each $r_j \in \alpha_i$, let there be $p_{r_j}$ number of elements in $\gamma_{r_j}$, say $\gamma_{r_j} = \{ r_{j,t_{r_j}} : 1 \leq t_{r_j} \leq p_{r_j} \}$.
So $$ \bigcap_{r \in \alpha_i} ( \cup_{k \in \gamma_r}S_k) = \bigcup_{1 \leq t_{r_j} \leq p_{r_j}} (S_{r_{1,t_{r_1}}} \cap S_{r_{2,t_{r_2}}} \cap \dots \cap S_{r_{q,t_{r_q}}}).$$

We have obtained that $ \cap_{r \in \alpha_j} ( \cup_{k \in \gamma_r}S_k)$ is a union of $\Pi_{j=1}^{q} p_{r_j}$ objects, each of which is an intersection of $q$ objects which looks like $S_{r_{1,t_{r_1}}} \cap S_{r_{2,t_{r_2}}} \cap \dots \cap S_{r_{q,t_{r_q}}}$.
Pick an ideal $I_{j_{r_1,t_{r_1}}} \cap I_{j_{r_2,t_{r_2}}} \cap \dots \cap I_{j_{r_q,t_{r_q}}}$.
Then there exists an $m \in \n$ such that $I_m = I_{r_{1,t_{r_1}}} \cap I_{r_{2,t_{r_2}}} \cap \dots \cap I_{r_{q,t_{r_q}}}$, and hence $S_m = S_{r_{1,t_{r_1}}} \cap S_{r_{2,t_{r_2}}} \cap \dots \cap S_{r_{q,t_{r_q}}}$.
If $S_m \seq S_i$, we are done.
Otherwise, $S_m \nsubseteq S_i$ will imply $I_m \nsubseteq I_i$ and hence $m \in \alpha_i$.
Thus $m = r_l$ for some $l \in \{ 1,2, \dots k \}$.
Then $r_{l,t_{r_l}} \in \gamma_{r_l}$ which implies $(I_m =) I_{r_l}  \nsubseteq I_{r_{l,t_{r_l}}}$, which is a contradiction to the fact that $I_m = I_{r_{1,t_{r_1}}} \cap I_{r_{2,t_{r_2}}} \cap \dots \cap I_{r_{q,t_{r_q}}}$.
\end{proof}
Let us demonstrate the above theorem with the help of few examples.
\begin{example}
	Let $H$ be a separable Hilbert space and $A = B(H) \oplus B(H)$.
	For $\mcal{I} = \{ I_1, \dots, I_9 = A\}$, the lattice of closed ideals of $A$ is given in the diagram below.

$$ \xymatrix{
&B(H) \oplus B(H) (= I_9)\\
B(H) \oplus K(H) (= I_7) \ar[ru]  & \hfill & K(H) \oplus B(H) (= I_8) \ar[lu] \\	
B(H) \oplus \{0\} (= I_4) \ar[u]  & K(H) \oplus K(H) (= I_5) \ar[lu] \ar[ru] & \{0\} \oplus B(H) (= I_6) \ar[u]\\
K(H) \oplus \{0\} (= I_2) \ar[u] \ar[ru] & \hfill  & \{0\} \oplus K(H) (= I_3) \ar[u] \ar[lu] \\
&\{0\} \oplus \{0\} (= I_1) \ar[lu] \ar[ru] }$$

Looking at the diagram, we obtain that $\gamma_2 = \{1,3,6\}$, $\gamma_3 = \{1,2,4\}$, $\gamma_4 = \{1,2,3,5,6,8\}$, $\gamma_5 = \{1,2,3,4,6\}$, $\gamma_6 = \{1,2,3,4,5,7\}$, $\gamma_7 = \{1,2,3,4,5,6,8\}$, $\gamma_8 = \{1,2,3,4,5,6,7\}$ and $\gamma_9 = \{1,2,3,4,5,6,7,8\}$.
Hence, every closed ideal of $C_0(X) \omin A$ is of the form: 

\noindent
$J(S_6) \omin I_2 + J(S_4) \omin I_3 + J(S_8) \omin I_4 + J(S_4 \cup S_6) \omin I_5 + J(S_7) \omin I_6 + J(S_8) \omin I_7 + \\ J(S_7) \omin I_8 + J(S_7 \cup S_8) \omin I_9$, for a unique $S = (S_1, S_2, \dots ,S_9) \in \mcal{T}_{\mcal{I}}^{9}$, where a pictorial representation of $S$ is the following (here an arrow from $S_j$ to $S_k$ means that $S_j \subseteq S_k$, for $j,k \in \N_9$):
$$ \xymatrix{
	&S_9 \\
	S_7 \ar[ru]  & \hfill & S_8 \ar[lu] \\	
	S_4 \ar[u]  & S_5 \ar[lu] \ar[ru] & S_6 \ar[u]\\
	S_2 \ar[u] \ar[ru] & \hfill  & S_3 \ar[u] \ar[lu] \\
	&S_1 \ar[lu] \ar[ru] }$$
\end{example}

We next  discuss the precise form of closed ideals of $C_0(X) \omin B(H)$, in terms of product ideals. Note that for a Hilbert space $H$, the set of all closed ideals forms a chain (see, \cite[Corollary 6.2]{luft}). In the following, $w_0$ denotes the cardinality of the set of all natural numbers and for every $i \in \N$, let  $w_i = 2^{w_{i-1}}$ so that $w_1$ is continuum.

\begin{example}\label{idealbh}
Let $X$ be a locally compact Hausdorff space and $H$ be a Hilbert space with  $w_n$ ($n \in \mathbb{N}$) as its Hilbert dimension.
Then the closed ideals of $C_0(X) \omin B(H)$ are of the form $\sum_{j=2}^{n+3} J(S_{j-1}) \omin I_j$ , where $I_j$'s are closed ideals of $B(H)$ and $S_j$'s are some closed subspaces of $X$.

To see this, let $\{0\} = I_1 \subsetneq I_2 \subsetneq I_3 \subsetneq \dots \subsetneq I_{n+3} = B(H) $ be the chain of closed ideals of $B(H)$ and $J$ be a closed ideal of $C_0(X) \omin B(H)$.
By \Cref{infideal}, there exists $n+3$ closed subspaces $ S_1 \seq S_2 \seq S_3 \seq \dots \seq S_{n+3} = X $ such that $J = J(S)$ where $S = \{S_i\}_{i=1}^{n+3}$.
As in \Cref{fin sum}, for $j \in \N_{n+3}$, $\cup_{k \in \gamma_j}S_k = S_{j-1}$ and hence $J = \sum_{j=2}^{n+3} J(S_{j-1}) \omin I_j$.
\end{example}

\section{Closed Lie ideals of $C_0(X,A)$}

The Lie normalizer of a subspace $S$ of a Lie algebra $A$ is defined by $N(S) := \lbrace a \in A : [a ,A] \seq S \rbrace$.
It can be easily verified that $N(I)$ is a closed subalgebra of $A$ for a closed ideal $I$ in $A$.
The Lie normalizer plays an important role in determining the Lie ideals of $A$ (for instance, see \cite{rnv},\cite{brv}). We identify the Lie normalizer of ideals of $C_0(X,A)$ and use this identification to characterize its closed Lie ideals.
\begin{theorem}\label{Lie}
	Let $X$ be a locally compact Hausdorff space and $A$ be a \CS \  with $\mcal{I} = \{I_i \}_{i \in \Delta}$ as the collection of all closed ideals such that $I_{\beta}= A$.
	Then a closed subspace $L$ of $C_0(X,A)$ is a closed Lie ideal if and only if there is an element $S = \{S_i \}_{i \in \Delta} \in \mcal{T}_{\mcal{I}}^{\beta}$ such that
	$$\lbrace f \in C_0(X,A) : f(S_i) \seq \ol{[I_i,A]}, \  \forall i \in \Delta    \rbrace \, \seq L \seq  \lbrace f \in C_0(X,A) : f(S_i) \seq N(I_i), \  \forall i \in \Delta    \rbrace.$$
\end{theorem}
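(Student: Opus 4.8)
The plan is to deduce \Cref{Lie} from the structure theory of closed Lie ideals of a single \CS, applied to the \CS\ $B := C_0(X,A)$, and then to translate the abstract ``sandwich'' description into the pointwise one via \Cref{infideal}. I will use the structure theorem in the following form (see \cite{marcoux2010}): a closed subspace $L$ of a \CS\ $B$ is a closed Lie ideal if and only if there is a closed ideal $J$ of $B$ with $\ol{[J,B]} \seq L \seq N(J)$, where $N(J) = \{g \in B : [g,B] \seq J\}$. Granting this, the whole theorem reduces to two identifications, valid for every $J = J(S)$ with $S = \{S_i\}_{i \in \Delta} \in \mcal{T}_{\mcal{I}}^{\beta}$ (by \Cref{infideal} every closed ideal of $B$ has this form):
\[
N(J(S)) = \{ f \in C_0(X,A) : f(S_i) \seq N(I_i),\ \forall i \in \Delta \},
\]
\[
\ol{[J(S),B]} = \{ f \in C_0(X,A) : f(S_i) \seq \ol{[I_i,A]},\ \forall i \in \Delta \}.
\]
Once both are established, the forward implication follows by taking the closed ideal $J$ provided by the structure theorem and writing $J=J(S)$; the converse follows by feeding the displayed sandwich back into the structure theorem, $J(S)$ being a closed ideal of $B$ by \Cref{infideal}.

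For the normalizer identity the inclusion $\supseteq$ is immediate: if $f(S_i)\seq N(I_i)$ for all $i$, then for $g\in B$ and $x\in S_i$ one has $[f,g](x)=[f(x),g(x)]\in[N(I_i),A]\seq I_i$, so $[f,g]\in J(S)$. For $\seq$, fix $f\in N(J(S))$, an index $i$, a point $x\in S_i$ and $a\in A$; choosing $g\in C_c(X)$ with $g(x)=1$ and forming $a'\hat g\in C_0(X,A)$ gives $[f,a'\hat g](x)=[f(x),a]$, which lies in $I_i$ because $[f,a'\hat g]\in J(S)$ and $x\in S_i$. As $a$ is arbitrary, $f(x)\in N(I_i)$. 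This is the elementary step.

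For the commutator identity the inclusion $\seq$ is clear, since for $f_k\in J(S)$, $g_k\in B$ and $x\in S_i$ we have $f_k(x)\in I_i$, whence $\sum_k[f_k,g_k](x)\in[I_i,A]\seq\ol{[I_i,A]}$, and the right-hand side is closed. The reverse inclusion is the heart of the matter and proceeds along the lines of \Cref{infideal}. Given $h$ with $h(S_i)\seq\ol{[I_i,A]}$ for all $i$, fix $x\in X$ and set $\gamma'=\{i:x\in S_i\}$. Since $\mcal{I}$ is closed under intersections, $I_{m}:=\cap_{i\in\gamma'}I_i\in\mcal{I}$, and compatibility of $S$ with $\mcal{I}$ gives $x\in S_m=\cap_{i\in\gamma'}S_i$. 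The crucial input is the intersection identity $\cap_{i\in\gamma'}\ol{[I_i,A]}=\ol{[I_m,A]}$, which yields $h(x)\in\ol{[I_m,A]}$. Using that the closed ideal generated by the local values $\{f(x):f\in J(S)\}$ equals $I_m$ (as extracted in the proof of \Cref{infideal}) together with $\{g(x):g\in B\}=A$, one approximates $h(x)$ by $u_x(x)$ for a suitable $u_x=\sum_k[f_k,\, e_k'\hat g_0]\in[J(S),B]$ with $f_k\in J(S)$ and $g_0(x)=1$; by continuity $\norm{u_x-h}<\epsilon$ on a neighbourhood $V_x$ of $x$. One then glues over the one-point compactification $\tilde X$ with a partition of unity $\{\phi_j\}$, forming $\sum_j\hat{\phi_j}u_{x_j}$. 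The point that keeps this inside $[J(S),B]$ is the centrality of scalar-valued functions: $\hat{\phi}[f,g]=[\hat\phi f,g]$ and $\hat\phi f\in J(S)$ because $J(S)$ is an ideal, so $\sum_j\hat{\phi_j}u_{x_j}\in[J(S),B]$, and the usual partition-of-unity estimate then gives $h\in\ol{[J(S),B]}$.

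The step I expect to be the main obstacle is the intersection identity $\cap_{i}\ol{[I_i,A]}=\ol{[(\cap_i I_i),A]}$. I would reduce it to the single identity $\ol{[I,A]}=I\cap\ol{[A,A]}$ for a closed ideal $I$, since then $\cap_i\ol{[I_i,A]}=\cap_i(I_i\cap\ol{[A,A]})=(\cap_i I_i)\cap\ol{[A,A]}=\ol{[(\cap_i I_i),A]}$. The inclusion $\ol{[I,A]}\seq I\cap\ol{[A,A]}$ is trivial, but the reverse rests on the trace characterization of commutator subspaces, namely that an element of $I$ annihilated by every bounded tracial functional of $A$ is a limit of commutators drawn from $I$; this is the genuinely analytic ingredient, and the part I expect to be most delicate in the non-unital, non-separable generality allowed here. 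By contrast, the partition-of-unity gluing is routine once \Cref{infideal} is available, the normalizer computation is elementary, and the sufficiency direction is immediate from the structure theorem.
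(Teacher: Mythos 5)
Your proposal is correct, and its skeleton---the sandwich theorem for Lie ideals, \Cref{infideal} to write $J=J(S)$, and the two identifications of $N(J(S))$ and $\ol{[J(S),C_0(X,A)]}$---is exactly the paper's; your normalizer computation coincides with the paper's almost verbatim. Where you genuinely diverge is the commutator identity. The paper proves it globally, with no pointwise work at all: it applies the identity $\ol{[J,B]}=J\cap\ol{[B,B]}$ (\cite[Proposition 5.25]{bks}) at the level of $B=C_0(X,A)$ itself, identifies $\ol{[B,B]}=\ol{[C_0(X)\omin A,\,C_0(X)\omin A]}=\ol{C_0(X)\ot[A,A]}$ using commutativity of $C_0(X)$, converts this to $\{f: f(X)\seq\ol{[A,A]}\}$ via \Cref{tensorandfunction} (with $Y=\emptyset$, $C=\ol{[A,A]}$), and then converts back pointwise using $I_i\cap\ol{[A,A]}=\ol{[I_i,A]}$ together with $S_\beta=X$. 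You instead apply the same algebraic identity inside $A$ and then redo the partition-of-unity gluing from the proof of \Cref{infideal} for spans of commutators, using centrality of scalar-valued functions to keep the glued element in $[J(S),B]$; this is more self-contained but longer, and one step needs care: your claim that the closed ideal generated by $\{f(x):f\in J(S)\}$ equals $I_m=\bigcap_{i:\,x\in S_i}I_i$ holds for the canonical choice $S_i=\bigcap_{f\in J}f^{-1}(I_i)$ produced in the proof of \Cref{infideal} (which is all the forward implication requires, the converse needing only the easy inclusions), but not for an arbitrary $S\in\mcal{T}_{\mcal{I}}^{\beta}$, so you should either restrict the hard inclusion to that canonical $S$ or first reduce to it using $\ol{[I_i,A]}=I_i\cap\ol{[A,A]}$ and $S_\beta=X$. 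Finally, the ``delicate analytic ingredient'' you flagged, $\ol{[I,A]}=I\cap\ol{[A,A]}$ for a closed ideal $I$, is precisely \cite[Proposition 5.25]{bks}, which the paper cites rather than reproves, and which your argument may cite as well; both proofs ultimately rest on this same external fact, so your worry does not constitute a gap. In short: same architecture, but the paper buys brevity by exploiting \Cref{tensorandfunction} and the tensor identification of the commutator space, while your route buys independence from those tensorial ingredients at the price of repeating the gluing technique.
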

\begin{proof}
	We know that a closed subspace $L$ of the \CS \ $C_0(X, A)$ is a Lie ideal if and only if there exists a closed ideal $J \seq C_0(X,A)$ such that $\ol{[J,C_0(X,A)]} \seq L \seq N(J)$ (\cite[Proposition 5.25, Theorem 5.27]{bks}).
	By \Cref{infideal}, $J = J(S)$ for some $S = \{S_i \}_{i \in \Delta} \in \mcal{T}_{\mcal{I}}^{\beta}$.
	Since for any fixed $a \in A$ and $x \in X$, there is an element in $C_0(X,A)$ which takes $x$ to $a$, we have
	\begin{eqnarray*}
		N(J) & = & \lbrace f \in C_0(X,A) : [f,g] \in J(S), \ \forall g \in C_0(X,A) \rbrace \\
		& = & \lbrace f \in C_0(X,A) : [f,g](x) \in I_i, \ \forall  \ x \in S_i, \, g \in C_0(X,A), \ \, \forall i \in \Delta   \rbrace \\
		& = & \lbrace f \in C_0(X,A) : f(x) \in N(I_i), \ \forall \ x \in S_i, \, i \in \Delta   \rbrace \\
		& = & \lbrace f \in C_0(X,A) : f(S_i) \seq N(I_i), \  \forall i \in \Delta   \rbrace.
	\end{eqnarray*}

	We know from \cite[Proposition 5.25]{bks} that $\ol{[I,B]} = I \cap \ol{[B,B]}$ for closed ideal $I$ of a \CS \ $B$.
	This fact, along with \Cref{tensorandfunction} gives
	\begin{eqnarray*}
		\ol{[J, C_0(X,A)]} 
		& = & \ol{[C_0(X,A),C_0(X,A)]} \cap J \\
		& = &   \ol{[C_0(X) \omin A,C_0(X) \omin A]} \cap J \\
		& = & \ol{C_0(X) \ot [A,A]} \cap J\\
		& = & \ol{C_0(X) \ot \ol{[A,A]}} \cap J \\
		& = & \lbrace f \in C_0(X,A): f(X) \seq \ol{[A,A]}\rbrace \cap \lbrace f \in C_0(X,A): f(S_i) \seq I_i, \ \forall i \in \Delta \rbrace \\
		& = & \lbrace f \in C_0(X,A): f(S_i) \seq \ol{[A,A]} \cap I_i, \ \forall i \in \Delta  \rbrace \\
		& = & \lbrace f \in C_0(X,A): f(S_i) \seq \ol{[I_i,A]}, \ \forall i \in \Delta  \rbrace.
	\end{eqnarray*}
	Hence the result.
\end{proof}

From the last result one can observe that if $N(I) = I +
\Z(A)$, $\mcal{Z}(A)$ being the centre of $A$, for every closed ideal $I$ of $A$, then for a closed ideal $J$ of 
$C_0(X,A)$, $N(J) = \lbrace f \in C_0(X,A) 
: f(S_i) \seq I_i + \Z(A), \  \forall i \in \Delta   \rbrace$ for some  $\{S_i \}_{i \in \Delta} \in 
\mcal{T}_{\mcal{I}}^{\beta}$.
%(What was written: As we observed in the proof of preceeding theorem, if $N(I_i) = I_i + \Z(A)$,  $\mcal{Z}(A)$ 
%being the centre of $A$, for every closed ideal $I_i$ of $A$ then for every closed ideal $J$ of $C_0(X,A)$, $N(J) = 
%\lbrace f \in C_0(X,A) : f(S_i) \seq I_i + \Z(A), \  \forall i \in \Delta   \rbrace$ for some  $\{S_i \}_{i \in 
%\Delta} \in \mcal{T}_{\mcal{I}}^{\beta}$. )
The question that comes next is the following: Can we write every element of $N(J)$ as $g+h$ such that $g(S_i) \subseteq 
I_i$ for every $i \in \Delta$ and $h$ is $\Z(A)$-valued?
We shall  observe in \Cref{notrace} that a positive answer of this question will help us to obtain a better representation of all closed Lie ideals of $C_0(X,A)$.

Recall that a $C^*$-algebra $A$ is said to have the {\it centre-quotient property} if $\mcal{Z}(A/I) = (\mcal{Z}(A)+I) 
/I$ for every closed ideal $I$ of $A$ (see \cite{arc} for details). Using the fact that for the natural quotient map 
$\pi: A \rightarrow A/I$, $N(I) = \pi^{-1}(\mcal{Z}(A/I))$, we have a nice relation between Lie normalizer and the 
centre-quotient property.

\begin{lem}\label{idealpluscent}
	A \CS \ $A$ has centre-quotient property if and only if $N(I) = I + \mcal{Z}(A)$ for every closed ideal $I$ in $A$.
\end{lem}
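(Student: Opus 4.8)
The plan is to pivot everything through the identity $N(I) = \pi^{-1}(\mcal{Z}(A/I))$ recorded just above the lemma, where $\pi : A \ra A/I$ denotes the quotient map with kernel $I$. Both implications then reduce to pushing this identity forward or pulling it back along $\pi$, using only elementary facts about a surjective linear map. Before starting I would isolate two such facts: (i) since $\pi$ is onto, $\pi(\pi^{-1}(T)) = T$ for every subset $T \seq A/I$; and (ii) for every linear subspace $S$ of $A$ with $I = \ker \pi \seq S$, one has $\pi^{-1}(\pi(S)) = S + \ker\pi = S$. I would also record the routine identity $\pi(\Z(A) + I) = \pi(\Z(A)) = (\Z(A) + I)/I$.

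For the forward direction, suppose $A$ has the centre-quotient property, so that $\mcal{Z}(A/I) = (\Z(A) + I)/I = \pi(\Z(A) + I)$ for each closed ideal $I$. Taking preimages and invoking $N(I) = \pi^{-1}(\mcal{Z}(A/I))$ gives $N(I) = \pi^{-1}\big(\pi(\Z(A) + I)\big)$; since $\Z(A) + I$ is a subspace containing $\ker\pi$, fact (ii) collapses this to $N(I) = \Z(A) + I = I + \Z(A)$, as desired. For the converse, suppose $N(I) = I + \Z(A)$ for every closed ideal $I$. Applying $\pi$ to the identity $N(I) = \pi^{-1}(\mcal{Z}(A/I))$ and using fact (i) yields $\mcal{Z}(A/I) = \pi(N(I)) = \pi(I + \Z(A)) = (\Z(A) + I)/I$, which is precisely the centre-quotient property.

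I do not anticipate a genuine obstacle here: once the identity $N(I) = \pi^{-1}(\mcal{Z}(A/I))$ is in hand, the argument is purely formal. The one point meriting care is the justification of that identity itself, namely that $N(I)$ is the \emph{full} preimage of $\mcal{Z}(A/I)$ and not merely a subset. For $a \in A$ one checks that $[a, b] \in I$ for all $b \in A$ is equivalent to $[\pi(a), \pi(b)] = 0$ for all $b$, and surjectivity of $\pi$ lets $\pi(b)$ range over all of $A/I$, so this says exactly $\pi(a) \in \mcal{Z}(A/I)$. This surjectivity is the only non-cosmetic ingredient, and it is what guarantees the preimage captures all of the centre of the quotient.
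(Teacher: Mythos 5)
Your proposal is correct and takes essentially the same route as the paper: the paper records the identity $N(I) = \pi^{-1}(\mcal{Z}(A/I))$ in the sentence immediately preceding the lemma and leaves the remaining (purely formal) preimage/image manipulations implicit, which is exactly what you have written out in detail, including the verification of that identity via surjectivity of $\pi$.
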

%\begin{proof}
%	Let $I$ be a closed ideal of $A$ and
%	Then 
%%	So that $\mcal{Z}(A/I) = (I + \mcal{Z}(A))/I $ if and only if $N(I) = \pi^{-1}((I + \mcal{Z}(A))/I) = I + \mcal{Z}(A)$.
%\end{proof}

A unital $C^*$-algebra $A$ is called {\it weakly central} if the continuous surjection $\psi : \text{Max}(A) \rightarrow \text{Max}(\mcal{Z}(A))$ given by $\psi(I) = I \cap \mcal{Z}(A)$ is an injection, where $\text{Max}(B)$ denotes the space of all  maximal ideals of a $C^*$-algebra $B$ endowed with the hull-kernel topology.
It is well known that a unital \CS \ with unique maximal ideal must have one dimensional centre \cite[Lemma 2.1]{arc}.
Since weak centrality and centre-quotient property are equivalent in unital \CSS \ (see, \cite[Theorem 1 and 2]{ves}),
presence of unique maximal ideal in a unital \CS \ $A$ implies that $A$ has centre-quotient property.
In \cite[Lemma 4.6]{brv} it was observed that for a simple unital \CS \ $A$ and a closed ideal $I \subseteq A$, $N(I) = I + C_0(X,\C 1)$.

\begin{theorem}\label{lienormalizer}
	Let $X$ be a locally compact Hausdorff space and $A$ be a unital \CS \ with unique maximal ideal.
	Then $N(J) = J + C_0(X,\C 1)$ for any closed ideal $J$ of $C_0(X,A)$.
\end{theorem}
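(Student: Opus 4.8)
The plan is to read off $N(J)$ from the normalizer computation already carried out inside the proof of \Cref{Lie}, and then to settle the decomposition question raised in the remark following that theorem.

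First I would record the structural consequences of the hypothesis. Since $A$ is unital with a unique maximal ideal, the space $\mathrm{Max}(A)$ is a singleton, so $A$ is trivially weakly central and hence has the centre-quotient property; moreover its centre is one dimensional, i.e. $\Z(A) = \C 1$. By \Cref{idealpluscent} this yields $N(I) = I + \C 1$ for every closed ideal $I$ of $A$. Writing $J = J(S)$ for the $S = \{S_i\}_{i \in \Delta} \in \mcal{T}_{\mcal{I}}^{\beta}$ afforded by \Cref{infideal} and substituting $N(I_i) = I_i + \C 1$ into the normalizer formula from the proof of \Cref{Lie}, I obtain
$$ N(J) = \{ f \in C_0(X,A) : f(S_i) \seq I_i + \C 1, \ \forall i \in \Delta \}. $$
The inclusion $J + C_0(X,\C 1) \seq N(J)$ is then immediate: $J \seq N(J)$ because $J$ is an ideal, and $C_0(X,\C 1)$ lies in the centre of $C_0(X,A)$, hence inside every Lie normalizer.

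The substance is the reverse inclusion, i.e. decomposing an arbitrary $f \in N(J)$ as $g + h$ with $g \in J$ and $h \in C_0(X,\C 1)$. Let $M$ denote the unique maximal ideal. Every proper closed ideal is contained in $M$, and since $S_i = \cap_{f \in J} f^{-1}(I_i)$ is monotone in $I_i$, this forces $S_i \seq S_M$ for every proper $I_i$. On the C$^*$-subalgebra $M + \C 1$ the scalar-extraction map $P(b + \lambda 1) = \lambda$ is a well-defined contractive $\ast$-homomorphism onto $(M+\C 1)/M \cong \C$ (well-definedness is exactly $1 \notin M$). For $x \in S_M$ the defining condition of $N(J)$ gives $f(x) \in M + \C 1$, so $\lambda(x) := P(f(x))$ is continuous on $S_M$ and, being dominated by $\norm{f(x)}$, lies in $C_0(S_M)$. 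A Tietze extension across the one-point compactification (sending $\infty \mapsto 0$) produces $\tilde{\lambda} \in C_0(X)$ agreeing with $\lambda$ on $S_M$; I then set $h = \tilde{\lambda} \cdot 1 \in C_0(X,\C 1)$ and $g = f - h$. To check $g \in J$ I verify $g(S_i) \seq I_i$: for $I_i = A$ this is vacuous, while for proper $I_i$ and $x \in S_i \seq S_M$ the scalar witnessing $f(x) \in I_i + \C 1$ coincides with $P(f(x))$ (because $I_i \seq M$ and $1 \notin M$), whence $g(x) = f(x) - \lambda(x) 1 \in I_i$.

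The main obstacle I anticipate is precisely the construction and global consistency of the scalar function $\lambda$. The point to get right is that, although a priori the scalar witnessing $f(x) \in I_i + \C 1$ could depend on the ideal $I_i$, the inclusions $I_i \seq M$ let one read every such scalar off the single quotient $(M + \C 1)/M$; this is what makes $\lambda$ a genuine continuous $C_0$-function on $S_M$ rather than a mere pointwise choice. The residual freedom on $X \setminus S_M$, where no ideal condition constrains $f$, is harmless and is absorbed by the Tietze extension. Once the coincidence of the $I_i$-scalar and the $M$-scalar is established, the membership $g \in J$ and the identity $N(J) = J + C_0(X,\C 1)$ follow routinely.
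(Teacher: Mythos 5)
Your proposal is correct and follows essentially the same route as the paper: reduce to the single subspace $S_M$ attached to the unique maximal ideal $M$ (every proper $I_i \seq M$ forces $S_i \seq S_M$), extract the scalar part of $f$ on $S_M$ by a contractive linear map that annihilates $M$ and fixes $\C 1$, note that this scalar function is continuous and vanishes at infinity, extend it to an element of $C_0(X)$, and subtract. The only differences are cosmetic: where you use the quotient $\ast$-homomorphism $M + \C 1 \to (M+\C 1)/M \cong \C$ (contractive for free), the paper manufactures the same scalar extraction from a norm-one Hahn--Banach functional $T \in A^{\ast}$ with $T(M)=\{0\}$ and $T(\lambda 1)=\lambda$, and the Tietze-type extension through the one-point compactification that you carry out by hand is instead cited from an earlier paper.
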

\begin{proof}
	Note that $\mcal{Z}(C_0(X,A)) = C_0(X,\mcal{Z}(A)) = C_0(X, \C 1)$ %\cite[Corollary 1]{haydon},
	 and hence $J + C_0(X,\C 1) \seq N(J)$.
	Let  $\mcal{I} = \{I_i \}_{i \in \Delta}$ be the collection of all closed ideals of $A$ with $A = I_{\beta}$ and let $I_{\beta'}$ be the unique maximal ideal of $A$, $\beta, \beta' \in \Delta$.
	Then, \Cref{infideal}, there exists an element $S = \{S_i \}_{i \in \Delta} \in \mcal{T}_{\mcal{I}}^{\beta}$ such that $J = J(S)$.
	Since $A$ has centre-quotient property, by \Cref{idealpluscent}, $N(I_i) = I_i + \C 1$ for every $i \in \Delta$.
	Let $f \in N(J) =\lbrace g \in C_0(X,A) : g(S_i) \seq I_i + \C 1 ,\  \forall i \in \Delta   \rbrace$ as noted in \Cref{Lie}.
	Since $I_{\beta'}$ is the unique maximal ideal of $A$ and $S \in \mcal{T}_{\mcal{I}}^{\beta}$, we have $S_{\alpha} \seq S_{\beta'}$ for every $\alpha \in \Delta \setminus \{\beta\}$. 
	
	On $S_{\beta'}$, write $f = g+h$ which satisfy $h(S_{\beta'}) \subseteq \C 1$ and  $g(S_i) \seq I_i$ for every $i \in \Delta \setminus \{\beta\}$.
	This is possible as no proper ideal of $A$ can intersect $\C 1$.
	Since $I_{\beta'} \cap \ \C 1 = \lbrace 0 \rbrace$, by Hahn-Banach Theorem, there exists $T \in  A^{\ast}$ such that $\norm{T} =1$, $T(I_{\beta'}) = \lbrace 0 \rbrace$ and $T (\lambda 1) = \lambda$ for $\lambda \in \C$.
	Then $T f = T h$ on $S_{\beta'}$.
	Also $f$ vanishes at infinity and $\norm{T} =1$, so we obtain that $T f$ vanishes at infinity because $\norm{T f}  \leq \norm{f }$.
	Since $\norm{T h } = \norm{h} $, $T h$ and hence $h$ is a continuous function vanishing at infinity.
	So $g = f-h$ is continuous on $S_{\beta'}$ and is vanishing at infinity.
	By \cite[Theorem 4.5]{brv}, there exists an $h' \in C_0(X)$ such that $h'_{|_{S_{\beta '}}} = h$.
	For  $x \in S_{\beta'}^c$, define $g'(x) = f(x) - h'(x)$.
	Then $f = g'+h'$ with $g' \in J(S)$ and $h' \in C_0(X, \C 1)$ and we are done.
\end{proof}

It is observed in the previous result that if $A$ is a unital \CS \ with a unique maximal ideal then $C_0(X,A)$ has the centre-quotient property.
We now generalize this result and prove that the centre-quotient property of $A$ passes to $C_0(X,A)$. We would like to point out that the proof given above does not work when $A$ has more than one maximal ideal because in this case a closed ideal may intersect the centre non-trivially, that is, $I \cap \Z(A) \neq \{ 0\}$, and this is where the proof will fail.
As an intermediate step towards this generalization, we provide the following result.

\begin{prop} 
	Let $X$ be a compact Hausdorff space and $A$ be a unital  \CS \ having centre-
	quotient property. Then $C(X,A)$ has centre-quotient property.
	%Equivalently, $N(J) = J + C(X,Z(A))$ for every closed ideal $J$ of $C(X,A)$.
\end{prop}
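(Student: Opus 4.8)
The plan is to obtain the centre-quotient property from weak centrality. Since $X$ is compact and $A$ is unital, $C(X,A)$ is a unital \CS, so the equivalence of weak centrality and the centre-quotient property for unital \CSS \ (\cite[Theorem 1 and 2]{ves}, as recalled before \Cref{lienormalizer}) reduces the task to proving that $C(X,A)$ is weakly central; that is, that the canonical continuous surjection $\psi : \text{Max}(C(X,A)) \to \text{Max}(\mcal{Z}(C(X,A)))$, $\psi(\mathfrak{M}) = \mathfrak{M} \cap \mcal{Z}(C(X,A))$, is injective. (By \Cref{idealpluscent} this is the same as the identity $N(J) = J + \mcal{Z}(C(X,A))$ for every closed ideal $J$, i.e. an affirmative answer to the decomposition question raised after \Cref{Lie}.)

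Next I would pin down the two maximal ideal spaces. The centre is computed fibrewise, $\mcal{Z}(C(X,A)) = C(X, \mcal{Z}(A))$, and since $A$ is unital its centre is a unital commutative \CS, $\mcal{Z}(A) \cong C(\Omega)$ with $\Omega = \text{Max}(\mcal{Z}(A))$; hence $\mcal{Z}(C(X,A)) \cong C(X \times \Omega)$, whose maximal ideals are the point evaluations indexed by $X \times \Omega$. For $C(X,A)$ itself I would invoke the description of its closed ideals as $\{f : f(x) \in I_x \ \forall x\}$ with each $I_x$ a closed ideal of $A$ (\cite[V.26.2.1]{naim}) to show that the maximal ideals are exactly $\mathfrak{M}_{(x_0,M)} := \{f \in C(X,A) : f(x_0) \in M\}$ for $x_0 \in X$ and $M \in \text{Max}(A)$: any proper closed ideal has $I_{x_0} \subsetneq A$ for some $x_0$, hence lies inside some $\mathfrak{M}_{(x_0,M)}$, while each $\mathfrak{M}_{(x_0,M)}$ is easily seen to be maximal, and $(x_0,M) \mapsto \mathfrak{M}_{(x_0,M)}$ is injective (separate distinct base points by a function, and distinct $M$ by a constant function).

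Then I would compute $\psi$ in these coordinates and conclude. Writing $\psi_A : \text{Max}(A) \to \Omega$, $\psi_A(M) = M \cap \mcal{Z}(A)$, for the analogous map of $A$, one has $\mathfrak{M}_{(x_0,M)} \cap C(X,\mcal{Z}(A)) = \{g \in C(X,\mcal{Z}(A)) : g(x_0) \in M \cap \mcal{Z}(A)\}$, which under $C(X,\mcal{Z}(A)) \cong C(X \times \Omega)$ is precisely the maximal ideal at the point $(x_0, \psi_A(M))$. Thus $\psi(\mathfrak{M}_{(x_0,M)})$ corresponds to $(x_0, \psi_A(M)) \in X \times \Omega$, so $\psi(\mathfrak{M}_{(x_0,M)}) = \psi(\mathfrak{M}_{(x_1,M')})$ forces $x_0 = x_1$ and $\psi_A(M) = \psi_A(M')$. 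As $A$ has the centre-quotient property it is weakly central, so $\psi_A$ is injective and $M = M'$; hence $\psi$ is injective, $C(X,A)$ is weakly central, and the centre-quotient property follows.

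The step I expect to be the main obstacle is the identification of $\text{Max}(C(X,A))$ together with the confirmation that the weak-centrality framework genuinely applies (that $\psi$ is the intended continuous surjection and that the hull--kernel topologies match up); the Naimark description of closed ideals reduces the set-level identification to a short check, but the topological bookkeeping needs a little care. A more hands-on alternative would be to establish the decomposition $N(J) = J + C(X,\mcal{Z}(A))$ directly: \Cref{Lie} together with the centre-quotient property of $A$ gives $N(J) = \{f : f(x) \in I_{(x)} + \mcal{Z}(A)\ \forall x\}$ with $I_{(x)} = \cap_{\{i : x \in S_i\}} I_i$, after which one must continuously choose a central lift $h(x) \in \mcal{Z}(A)$ of the central part of $f(x)$ modulo $I_{(x)}$. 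In that route the genuine difficulty is the (upper semi)continuity of the ideal-valued map $x \mapsto I_{(x)}$, and it is exactly here that the compactness of $X$ and a partition-of-unity gluing would have to be exploited.
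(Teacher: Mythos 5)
Your proposal is correct and takes essentially the same route as the paper: both reduce the centre-quotient property to weak centrality via Vesterstr{\o}m's theorem, identify the maximal ideals of $C(X,A)$ as $\{f \in C(X,A) : f(x_0) \in M\}$ with $x_0 \in X$ and $M \in \text{Max}(A)$ (the paper cites \cite[Corollary V.26.2.2]{naim} directly rather than deriving this from the closed-ideal description), and then obtain injectivity of $\psi$ by intersecting with $\Z(C(X,A)) = C(X,\Z(A))$ and invoking the weak centrality of $A$. Your Gelfand identification $C(X,\Z(A)) \cong C(X\times\Omega)$ is additional bookkeeping the paper avoids by a uniqueness argument via Urysohn's Lemma, but the underlying argument is the same.
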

\begin{proof}
	It is sufficient to prove that $C(X,A)$ is weakly central. Consider maximal ideals $J_1$ and $J_2$ of $C(X,A)$ such that $J_1 \cap \Z(C(X,A)) = J_2 \cap \Z(C(X,A))$.
	For $i = 1,2$, there exist maximal ideals $I_i$ of $A$ and $x_i \in X$ such that $J_i = \{f \in C(X,A): f(x_i) \in I_i \}$ (\cite[Corollary V.26.2.2]{naim}). With the help of Urysohn's Lemma, one can easily verify that $I_i$ and $x_i$ are unique.  Since $\Z(C(X,A)) = C(X, \Z(A))$, we have  $J_i \cap \Z(C(X,A)) = \{ f \in C(X,Z(A)): f(x_i) \in I_i \cap \Z(A)  \}$. By the uniqueness, $x_1 = x_2$ and $I_1 \cap \Z(A) = I_2 \cap \Z(A)$. Since $A$ is weakly central, we obtain $I_1 = I_2$ and hence $J_1 = J_2$.
\end{proof}

\begin{theorem}\label{cqpinfunctionspace}
	If $X$ is a locally compact Hausdorff topological space and $A$ is a \CS. 
	If $C_0(X,A)$ has centre-quotient property then $A$ has centre-quotient property.
	Converse is true if $A$ is unital.
\end{theorem}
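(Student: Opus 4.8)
The plan is to prove the two implications separately, and in each direction to reduce the locally compact case to the compact case already handled in the preceding proposition by passing to unitizations and one-point compactifications.

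For the forward implication, suppose $C_0(X,A)$ has the centre-quotient property; I want to deduce the same for $A$. The natural strategy is to realize $A$ as a quotient (or a suitable subquotient) of $C_0(X,A)$ that is compatible with centres. Fix any point $x_0 \in X$; evaluation $\mathrm{ev}_{x_0}: C_0(X,A) \to A$, $f \mapsto f(x_0)$, is a surjective $*$-homomorphism. Given a closed ideal $I$ of $A$, I would pull it back to the closed ideal $\tilde I = \{ f \in C_0(X,A): f(x_0) \in I\}$ and use $\mathrm{ev}_{x_0}$ to identify $C_0(X,A)/\tilde I \cong A/I$. Since $\mcal{Z}(C_0(X,A)) = C_0(X,\mcal{Z}(A))$ (used already in \Cref{lienormalizer} and the preceding proposition) and evaluation sends this onto $\mcal{Z}(A)$, the centre-quotient identity $\mcal{Z}(C_0(X,A)/\tilde I) = (\mcal{Z}(C_0(X,A)) + \tilde I)/\tilde I$ should transport under $\mathrm{ev}_{x_0}$ to $\mcal{Z}(A/I) = (\mcal{Z}(A)+I)/I$. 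The one subtlety to check is that the central elements of $A/I$ all lift to central elements of $C_0(X,A)/\tilde I$, i.e. that the isomorphism $C_0(X,A)/\tilde I \cong A/I$ carries centres to centres; this follows because the isomorphism is induced by a $*$-homomorphism and because a constant $\mcal{Z}(A)$-valued function supplies a central lift of any prescribed value at $x_0$.

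For the converse, assume $A$ is unital with the centre-quotient property. I would first invoke the proposition immediately above to get that $C(\tilde X, A)$ has the centre-quotient property, where $\tilde X = X \cup \{\infty\}$ is the one-point compactification. The point is that $C_0(X,A)$ embeds as the ideal $\{ F \in C(\tilde X, A): F(\infty) = 0\}$ of $C(\tilde X, A)$. The centre-quotient property is inherited by ideals that are themselves of a manageable form, so I would show that the property for $C(\tilde X,A)$ descends to this ideal. Concretely, any closed ideal $K$ of $C_0(X,A)$ extends to a closed ideal of $C(\tilde X, A)$, and maximal ideals of $C_0(X,A)$ correspond to maximal ideals of $C(\tilde X,A)$ not containing the whole ideal $C_0(X,A)$; weak centrality of $C(\tilde X, A)$ then forces weak centrality of $C_0(X,A)$ after verifying that the central slices $\mcal{Z}(C_0(X,A)) = C_0(X,\mcal{Z}(A))$ separate these maximal ideals exactly as they do in the compact case. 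Since weak centrality and the centre-quotient property coincide for unital algebras, and $C(\tilde X, A)$ is unital, this yields the claim for $C_0(X,A)$.

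The main obstacle I anticipate is the converse direction, specifically the passage from $C(\tilde X, A)$ to its ideal $C_0(X,A)$: the centre-quotient property is a statement about \emph{all} closed ideals and their quotients, and it is not automatic that a property holding for a unital algebra persists in a (non-unital) ideal, since quotients of the ideal are not literally quotients of the ambient algebra and the centre of the ideal ($C_0(X,\mcal{Z}(A))$, which has no unit) behaves differently. I would handle this by reducing everything to the weak-centrality formulation via maximal ideals, where the Glimm-type description of maximal ideals of $C(\tilde X, A)$ as $\{F: F(y) \in M\}$ for $y \in \tilde X$ and $M \in \mathrm{Max}(A)$ (as in \cite[Corollary V.26.2.2]{naim}) makes the correspondence of maximal ideals and their central slices explicit, and the injectivity $\psi$ for $C_0(X,A)$ follows from the uniqueness of $(y,M)$ together with the weak centrality of $A$. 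The care required is to argue that a maximal ideal of $C_0(X,A)$ never comes from the point $\infty$, so that only genuine points of $X$ and maximal ideals of $A$ enter, after which the separation argument is identical to the compact case.
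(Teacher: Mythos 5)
Your forward implication is essentially sound and close in spirit to the paper's own argument: the paper also fixes $x\in X$ and works with the ideal $\{f\in C_0(X,A):f(x)\in I\}$, except that it phrases everything through the normalizer criterion of \Cref{idealpluscent} rather than through the quotient isomorphism $C_0(X,A)/\tilde I\cong A/I$; both routes work. (One small slip: for non-compact $X$ a constant $\Z(A)$-valued function does \emph{not} lie in $C_0(X,A)$; you should instead multiply the desired central value by a Urysohn function equal to $1$ at $x_0$. This is trivial to repair.)

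The converse direction, however, has a genuine gap at its final step. Your plan is: deduce weak centrality of $C(\tilde X,A)$ from its centre-quotient property, transfer weak centrality to $C_0(X,A)$ via the correspondence of maximal ideals, and then conclude that $C_0(X,A)$ has the centre-quotient property ``since weak centrality and the centre-quotient property coincide for unital algebras.'' But $C_0(X,A)$ is \emph{not} unital when $X$ is non-compact, and the equivalence you invoke (Vesterstr{\o}m, \cite{ves}) is proved only for unital $C^*$-algebras; it is precisely the implication you need --- injectivity of $\psi$ on maximal ideals forces the centre-quotient identity for \emph{all} closed ideals --- that uses unitality in an essential way. In the non-unital setting this implication is not available: the centre-quotient property quantifies over every closed ideal $J$ of $C_0(X,A)$, whereas weak centrality only sees maximal ones, and quotients of a non-unital algebra by maximal ideals can be simple non-unital algebras with trivial centre, so the maximal-ideal data does not control general quotients. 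The paper circumvents exactly this obstacle by never asserting weak centrality of the non-unital algebra: it uses \Cref{idealpluscent} (the normalizer criterion $N(J)=J+\Z$, which is valid without a unit), proves via a quasi-central approximate identity that $N(J)_{C_0(X,A)}\seq N(J)_{C(\tilde X,A)}$ for any closed ideal $J$ of $C_0(X,A)$ (such $J$ being automatically a closed ideal of $C(\tilde X,A)$), applies the centre-quotient property of the \emph{unital} algebra $C(\tilde X,A)$ to write $f=g+h$ with $g\in J$ and $h\in C(\tilde X,\Z(A))$, and finally observes $h(\infty)=f(\infty)-g(\infty)=0$, so $h\in C_0(X,\Z(A))$. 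To fix your proof you would need either this normalizer/approximate-identity argument or an independent proof that weak centrality implies the centre-quotient property for the specific non-unital algebra $C_0(X,A)$; as written, the last inference does not follow.
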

\begin{proof}
%	We use \Cref{idealpluscent} to prove the first part.
In order to prove that $A$ has centre-quotient property, it is sufficient to prove that for a closed ideal $I$ of $ A$, $N(I) \subseteq I + \mathcal{Z}(A)$. For a fixed $x \in X$, consider
a closed ideal $J$ of $C_0(X,A)$ given by $ \{ f \in  C_0(X,A) : f(x) \in I \}$. Now, for $a \in N(I)$, let  $f \in  C_0(X,A)$ such that $f(x) =a$.
% a function $f(y) := a f'(y)$ where $f' \in C_c(X)$ which is 1 on $x$.
	For any $g \in C_0(X,A)$, $(fg -gf)(x) = ag(x) - g(x) a  \in I$, which implies that $f \in N(J)$.
	Since $C_0(X,A)$ has centre-quotient property, $f = g + h$, for some $g \in J$ and $h \in C_0(X,\Z(A))$.
	Thus $a = f(x) = g(x) + h(x) \in I + \Z(A)$.
	%Hence $N(I) \subseteq I+\Z(A)$.
	
	Conversely, suppose that $A$ is unital and has centre-quotient property.
% For a closed ideal $J$ of $C_0(X,A)$, we shall prove that $N(J)_{C_0(X,A)} \subseteq J + C_0(X,\Z(A))$. 
	If $\tilde{X}$ denotes the one point compactification of $X$, then we have a natural inclusion $C_0(X,A) \subseteq C(\tilde{X}, A)$.  For a closed ideal $J$ of $C_0(X,A)$, we claim that $ N(J)_{C_0(X,A)} \subseteq N(J)_{C(\tilde{X}, A)}$, where $ N(J)_{B}$ represents the Lie normalizer in $B$. %$f \in N(J)_{C_0(X,A)}$To verify this claim, we need to prove that $fg - gf \in J$ for every $g \in C(\tilde{X}, A)$ and $f \in N(J)_{C_0(X,A)}$.
Let $f  \in N(J)_{C_0(X,A)}$ and  $\{f_\mu\}$ be a quasi central approximate identity of the closed ideal $ C_0(X,A)$ of $C(\tilde{X}, A)$.
Then for any $g \in C(\tilde{X}, A)$, 
\begin{eqnarray*} 
 f g - gf  & = &  \lim f f_\mu g - \lim g f f_{\mu} \\
   &=&  \lim (f f_\mu g -  g f f_\mu + f_\mu gf - f_\mu gf) \\
   &=& \lim (f f_\mu g - f_\mu g f + f_\mu gf - gf f_\mu) \\
   &= & \lim (f f_\mu g - f_\mu g f ),
\end{eqnarray*}
since $\{f_\mu\}$ is quasi central approximate identity and $gf \in C_0(X,A)$.
Note that $f_\mu g \in C_0(X,A)$ and $f \in N(J)_{C_0(X,A)}$ together imply that $ ff_\mu g - f_\mu g f \in J$ for every $\mu$.
Thus $ fg - gf \in J$ which gives $f \in N(J)_{C(\tilde{X}, A)}$.

Since $C(\tilde{X}, A)$ has centre-quotient property (\Cref{cqpinfunctionspace}) and 
%Since $C_0(X,A)$ is a closed ideal of $C(\tilde{X}, A)$, 
$J$ is also a closed ideal of $C(\tilde{X}, A)$, we have $N(J)_{C(\tilde{X}, A)} = J + C(\tilde{X}, \Z(A))$.
Now, for $f \in N(J)_{C_0(X,A)}$, there exist $g \in J$ and $h \in C(\tilde{X}, \Z(A))$ such that $f = g+h$.
If $\tilde{X} = X \cup \{\infty\}$, then $f(\infty) = 0 = g(\infty)$ which gives $h(\infty) = f(\infty) - g(\infty) = 0$ so that $h \in C_0(X,\Z(A))$. Thus  $N(J)_{C_0(X,A)} \subseteq J + C_0(X,\Z(A))$ and hence the result holds.
\end{proof}

%This will help us to give a better picture of closed Lie ideals of $C_0(X,A)$ in some cases.
We now characterize the Lie ideals of a class of $C^*$-algebras. Recall that a bounded linear functional $f$ on a \CS \ $A$ is said to be a \textit{tracial state} if  $f$ is positive of norm 1 and $f([a,b])= 0$ for every $a,b \in A$.

\begin{cor}\label{notrace}
	Let $X$ be a locally compact Hausdorff space and $A$ be a unital \CS \ with centre-quotient property and no tracial states.
	Then a closed subspace $L$ of $C_0(X,A)$ is a Lie ideal if and only if it is of the form $\ol{J+K}$ for some closed ideal $J$ of $C_0(X,A)$ and a closed subspace $K$ of $C_0(X, \Z(A))$.
\end{cor}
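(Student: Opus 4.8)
The plan is to combine \Cref{Lie} with the two extra hypotheses to collapse its two-sided sandwich into the clean inclusion $J \seq L \seq J + C_0(X,\Z(A))$, and then read off the stated decomposition. First I would record the two simplifications. Since $A$ has no tracial states, $\ol{[A,A]} = A$: this is the only place the trace hypothesis enters, and it follows from the Cuntz--Pedersen theorem, by which the self-adjoint part of $\ol{[A,A]}$ is precisely the set of self-adjoint elements annihilated by every tracial state, which is all of $A_{sa}$ when no tracial state exists. Consequently $\ol{[I_i,A]} = I_i \cap \ol{[A,A]} = I_i$ for every closed ideal $I_i$, so the lower bound in \Cref{Lie} is exactly $\{f : f(S_i) \seq I_i\} = J(S)$, a genuine closed ideal $J$ of $C_0(X,A)$ by \Cref{infideal}. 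On the other hand, since $A$ is unital with centre-quotient property, \Cref{cqpinfunctionspace} shows $C_0(X,A)$ has the centre-quotient property, whence \Cref{idealpluscent} gives $N(J) = J + \Z(C_0(X,A)) = J + C_0(X,\Z(A))$, which matches the upper bound of \Cref{Lie}. Feeding both facts in, a closed subspace $L$ is a Lie ideal if and only if there is a closed ideal $J$ of $C_0(X,A)$ with $J \seq L \seq J + C_0(X,\Z(A))$.

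For the forward implication I would set $K := L \cap C_0(X,\Z(A))$, which is automatically a closed subspace of $C_0(X,\Z(A))$. Clearly $J + K \seq L$. For the reverse containment, given $f \in L$ I would use $L \seq J + C_0(X,\Z(A))$ to write $f = j + z$ with $j \in J$ and $z \in C_0(X,\Z(A))$; since $J \seq L$ and $L$ is a subspace, $z = f - j \in L$, so $z \in K$ and $f \in J + K$. Thus $L = J + K = \ol{J+K}$, which is the asserted form.

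For the converse, suppose $L = \ol{J+K}$ with $J$ a closed ideal and $K \seq C_0(X,\Z(A))$ a closed subspace. Then $L$ is closed by definition, so it remains to check $[C_0(X,A),L] \seq L$. On $J+K$ this is immediate: $[C_0(X,A),J] \seq J$ because $J$ is an ideal, while $[C_0(X,A),K] = \{0\}$ because $C_0(X,\Z(A)) = \Z(C_0(X,A))$ is central; hence $[C_0(X,A), J+K] \seq J \seq L$. Since the Lie bracket is jointly continuous, for any $g \in C_0(X,A)$ and any $f \in \ol{J+K}$ one obtains $[g,f] \in \ol{J} = J \seq L$, so $L$ is a closed Lie ideal.

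The main obstacle, and the only point where the full strength of the hypotheses is needed, is the first paragraph: deducing $\ol{[A,A]} = A$ from the absence of tracial states (so that the lower bound of \Cref{Lie} becomes an honest closed ideal rather than the strictly smaller commutator-valued space), and identifying $N(J)$ with $J + C_0(X,\Z(A))$ through the passage of the centre-quotient property from $A$ to $C_0(X,A)$. Once the sandwich $J \seq L \seq J + C_0(X,\Z(A))$ is secured, both implications are routine linear-algebra and continuity arguments.
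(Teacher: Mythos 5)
Your proposal is correct and takes essentially the same route as the paper: both collapse the Lie-ideal sandwich to $J \seq L \seq J + C_0(X,\Z(A))$ by using the absence of tracial states to turn the lower bound into the ideal itself (the paper passes tracial-state-freeness to $C_0(X,A)$ and applies the cited Bre\v{s}ar--Kissin--Shulman results there, whereas you work at the level of $A$ via Cuntz--Pedersen and then feed this into \Cref{Lie}, a harmless variation), and by using \Cref{cqpinfunctionspace} together with \Cref{idealpluscent} to identify $N(J)$ with $J + C_0(X,\Z(A))$. Your explicit choice $K = L \cap C_0(X,\Z(A))$ and the converse verification merely spell out details the paper leaves implicit.
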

\begin{proof}
	%Let $\mcal{I} = \{I_i \}_{i \in \Delta}$ be the collection of all closed ideals of $A$.
	Since $A$ has no tracial states, from \cite[Lemma 2.4]{brv}, $C_0(X,A)$ has no tracial states. Thus, by \cite[Proposition 5.25]{bks}, $\ol{[J,A]} = J$ for every closed ideal $J$ of $C_0(X,A)$.
	From  \cite[Theorem 5.27]{bks} and \Cref{cqpinfunctionspace}, a closed subspace $L$ of $C_0(X,A)$ is a Lie ideal if and only if  there exists a closed ideal $J$ of $C_0(X,A)$ such that $J \seq L \seq J+C_0(X,\Z(A))$.
	Hence $L$ must be of the form $\ol{J+K}$ for some closed subspace $K$ of  $C_0(X,\Z(A))$.
\end{proof}

As a consequence, we can now characterize all closed ideals of $C_0(X) \omin B(H)$.
\begin{cor}
	For a separable Hilbert space $H$ and a locally compact Hausdorff space $X$, a closed subspace $L$ of $C_0(X) \omin B(H)$ is a Lie ideal if and only if there exist two closed subspaces $S_1 \seq S_2$ of $X$ and a closed subspace $K$ of $C_0(X) \ot \C 1$ such that $$L = \ol{J(S_1) \ot K(H) + J(S_2) \ot B(H) + K}.$$
\end{cor}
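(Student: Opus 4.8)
The plan is to recognize that $A = B(H)$, for a separable (infinite-dimensional) Hilbert space $H$, is exactly the kind of algebra to which \Cref{notrace} applies, and then to substitute the explicit ideal description of \Cref{idealbh}. First I would record three standard structural facts about $B(H)$: it is unital with $\Z(B(H)) = \C 1$; its closed ideals form the chain $\{0\} \subsetneq K(H) \subsetneq B(H)$, so $K(H)$ is the \emph{unique} maximal ideal and therefore, by the weak-centrality argument recalled just before \Cref{lienormalizer}, $B(H)$ enjoys the centre-quotient property; and $B(H)$ carries no tracial state. Thus $B(H)$ meets all hypotheses of \Cref{notrace}, which I may transport between $C_0(X,B(H))$ and $C_0(X) \omin B(H)$ through the canonical $*$-isomorphism $\tilde{\varphi}$.

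For the forward implication, suppose $L$ is a closed Lie ideal of $C_0(X) \omin B(H)$. By \Cref{notrace} there exist a closed ideal $J$ of $C_0(X) \omin B(H)$ and a closed subspace $K$ of $C_0(X,\Z(B(H))) = C_0(X) \ot \C 1$ with $L = \ol{J + K}$. Since the chain of closed ideals of $B(H)$ has exactly the three members $\{0\}, K(H), B(H)$, \Cref{idealbh} (equivalently \Cref{fin sum}) yields closed subspaces $S_1 \seq S_2$ of $X$ with $J = J(S_1) \omin K(H) + J(S_2) \omin B(H)$. Substituting this into $L = \ol{J+K}$ and absorbing the two $\min$-closures into the single outer closure produces $L = \ol{J(S_1) \ot K(H) + J(S_2) \ot B(H) + K}$, as required.

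For the converse, given closed subspaces $S_1 \seq S_2$ of $X$ and a closed subspace $K$ of $C_0(X) \ot \C 1$, I would first note that $S = \{S_1, S_2, X\}$ lies in $\mcal{T}_{\mcal{I}}^{3}$ for $\mcal{I} = \{\{0\}, K(H), B(H)\}$: since $\mcal{I}$ is a chain, compatibility reduces precisely to the inclusion $S_1 \seq S_2$. Hence by \Cref{fin sum} the sum $J := J(S_1) \omin K(H) + J(S_2) \omin B(H)$ is a genuine closed ideal of $C_0(X) \omin B(H)$, and as $K$ is a closed subspace of $C_0(X,\Z(B(H)))$, \Cref{notrace} guarantees that $\ol{J + K} = \ol{J(S_1) \ot K(H) + J(S_2) \ot B(H) + K}$ is a closed Lie ideal.

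The argument is in essence an assembly of \Cref{notrace} and \Cref{idealbh}, so I do not expect a deep obstacle; the points demanding care are the bookkeeping of closures, namely that replacing each product ideal $J(S_i) \omin (\cdot)$ by the algebraic tensor product $J(S_i) \ot (\cdot)$ leaves the outer closure unchanged, together with the elementary verification that $\{S_1, S_2, X\}$ is a legitimate compatible family. Beyond confirming the three defining properties of $B(H)$, which are classical, the remainder is routine.
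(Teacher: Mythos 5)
Your proposal is correct and takes essentially the same route as the paper, whose entire proof is to note that $B(H)$ has no tracial states and a unique maximal ideal (hence, being unital, the centre-quotient property) and then combine \Cref{notrace} with \Cref{idealbh}. Your write-up simply fills in the details the paper leaves implicit — the verification of compatibility for the chain $\{0\}\subsetneq K(H)\subsetneq B(H)$, the absorption of the $\min$-closures, and the observation that \Cref{fin sum} covers the separable case directly.
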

\begin{proof}	
Since $B(H)$ has no 
tracial states and has a unique maximal ideal, the result is an easy consequence of \Cref{idealbh} and \Cref{notrace}.
\end{proof}

\end{document}